\documentclass[preprint,12pt]{elsarticle}
\usepackage[T1]{fontenc}
\usepackage[utf8]{inputenc}
\usepackage{mathrsfs,amsmath,amssymb,amsthm,mathtools,bbm}
\usepackage{enumitem}
\usepackage{xcolor,graphicx}
\usepackage[a4paper,margin=1in]{geometry}
\definecolor{Bittersweet}{HTML}{C04F17}
\usepackage[unicode,bookmarksnumbered,colorlinks,plainpages,linktocpage]{hyperref}
\definecolor{Darkgreen}{rgb}{0,0.4,0}
\hypersetup{%
    pdfborder = {0 0 0},
    colorlinks,
    citecolor=blue,
    filecolor=Darkgreen,
    linkcolor=Bittersweet,
    urlcolor=blue}

\usepackage{comment}

\newtheorem{theorem}{Theorem}
\newtheorem{proposition}[theorem]{Proposition}
\newtheorem{corollary}[theorem]{Corollary}
\newtheorem{lemma}[theorem]{Lemma}

\theoremstyle{definition}
\newtheorem{definition}[theorem]{Definition}
\newtheorem{remark}[theorem]{Remark}
\newtheorem{example}[theorem]{Example}

\newcommand{\defop}{\operatorname{def}}
\newcommand{\Pred}{\operatorname{Pred}}

\begin{document}
\hyphenpenalty=10000
\exhyphenpenalty=10000
\sloppy
%%%%%%%%%%%%%%%%%%%%%%%%%%%%%%%%%%%%%%%%%%%%%%%%%
%%%%%%%%%%%%%%%%%%%%%%%%%%%%%%%%%%%%%%%%%%%%%%%%%
\begin{frontmatter}
%%%%%%%%%%%%%%%%%%%%%%%%%%%%%%%%%%%%%%%%%%%%%%%%%
\title{Symmetric Grassmann Formulas: Monotone Dimension-Defect Hierarchies}
%%%%%%%%%%%%%%%%%%%%%%%%%%%%%%%%%%%%%%%%%%%%%%%%%
\author{Masoud Gharahi}
\ead{masoud.gharahi@gmail.com; masoud.gharahighahi@uj.edu.pl}
\author{Diego Ponterio}
\address{Faculty of Physics, Astronomy and Applied Computer Science, \\Jagiellonian University, 30-348 Kraków, Poland}
%%%%%%%%%%%%%%%%%%%%%%%%%%%%%%%%%%%%%%%%%%%%%%%%%
\begin{abstract}
We study the dimension defect of finitely many subspaces over an arbitrary field. We derive a symmetric, nonrecursive Grassmann-type formula for the dimension of their sum. The formula expresses the total dimension loss through intersections of a distinguished subspace with partial sums of the remaining subspaces, with coefficients determined by the number of subspaces involved. We also show that the formula admits a Shapley-value interpretation for the associated representable polymatroid. Grouping the correction terms by the number of participating subspaces yields a nonnegative dimension-defect profile. We prove that this profile is monotone and identify its successive gaps with the discrete curvatures of the average-rank profile. These gaps give exact remainders in two-sided defect bounds. Equality in either bound holds precisely when the images of the subspaces in the quotient by their common intersection form an internal direct sum. We also give an exact geometric decomposition of the Kinser slack into nonnegative quotient dimensions and characterize equality. Averaging these slacks over permutations and contractions recovers every defect curvature except the final one; additional representability constraints remain in the individual ordered slacks. Weighted and dual formulas accompany the expansion, and entropy analogues express the defect levels and curvatures as averages of mutual and conditional mutual information, respectively.
\end{abstract}
%%%%%%%%%%%%%%%%%%%%%%%%%%%%%%%%%%%%%%%%%%%%%%%%%
\begin{keyword}
Grassmann formula \sep subspace arrangements \sep dimension defect \sep submodular rank function \sep representable polymatroid \sep Shapley value \sep linear rank inequalities

\MSC[2020] 15A03 \sep 05B35 \sep 94A17 \sep 91A12
\end{keyword}
%%%%%%%%%%%%%%%%%%%%%%%%%%%%%%%%%%%%%%%%%%%%%%%%%
\end{frontmatter}
%%%%%%%%%%%%%%%%%%%%%%%%%%%%%%%%%%%%%%%%%%%%%%%%%

%%%%%%%%%%%%%%%%%%%%%%%%%%%%%%%%%%%%%%%%%%%%%%%%%
\section{Introduction}
%%%%%%%%%%%%%%%%%%%%%%%%%%%%%%%%%%%%%%%%%%%%%%%%%
Let $V$ be a finite-dimensional vector space over an arbitrary field $\mathbbm{F}$, and let $V_1,\ldots,V_n$ be subspaces of $V$, with $n\geq2$. We write $[n]\coloneq\{1,\ldots,n\}$ and denote by $\mathfrak{S}_n$ the symmetric group on $[n]$.

For two subspaces $V_1,V_2\subseteq V$, the classical Grassmann formula \cite{Grassmann} reads
\begin{equation}\label{Grassmann-classical}
\dim(V_1+V_2)=\dim(V_1)+\dim(V_2)-\dim(V_1\cap V_2).
\end{equation}
It measures exactly the loss of dimension caused by overlap. For three or more subspaces, the direct inclusion--exclusion analogue obtained by alternating dimensions of ordinary intersections fails \cite{Axler24}. The reason is that the lattice of subspaces is modular but generally not distributive \cite{Birkhoff67}. For subspaces $U,V,W$, the intersection $U\cap(V+W)$ need not equal $(U\cap V)+(U\cap W)$. Thus a subspace can intersect $V$ and $W$ trivially while intersecting their sum nontrivially, an overlap that is invisible to the ordinary inclusion--exclusion.

A homological approach to this failure is developed by Day and Wade \cite[Sections~3.1--3.4]{DW18}, who associate a chain complex to a subspace arrangement using ordinary intersections. The additivity of dimension along short exact sequences is encoded by the Grothendieck group of finite-dimensional vector spaces. Applied to this complex, the Euler-characteristic identity expresses the dimension of the sum as ordinary inclusion--exclusion together with an alternating correction from the homology dimensions \cite[Remark~3.10]{DW18} \footnote{Explicitly, set $C_0=\sum_{i\in[n]}V_i$, and $C_k=\bigoplus_{\substack{S\subseteq[n]\\|S|=k}} \bigcap_{j\in S}V_j$ for $1\leq k\leq n$. Also set $C_{n+1}=\{0\}$. The boundary maps $\partial_k:C_k\to C_{k-1}$ are alternating inclusions for $2\leq k\leq n$, while $\partial_1$ is addition and
$\partial_{n+1}:C_{n+1}\to C_n$ is the zero map. The homology spaces $\mathcal{H}_k=\ker(\partial_k)/\operatorname{im}(\partial_{k+1})$ measure the failure of exactness. Applying the standard Euler-characteristic identity to the complex of Day and Wade~\cite{DW18} gives:
\[
\dim\Bigl(\sum_{i\in[n]}V_i\Bigr)=\sum_{k\in[n]}(-1)^{k+1}
\sum_{\substack{\emptyset\neq S\subseteq[n]\\|S|=k}}
\dim\Bigl(\bigcap_{j\in S}V_j\Bigr)+\sum_{k\in[n]}(-1)^k\dim(\mathcal{H}_k).
\]
}.

An exact identity for subspace sums also addresses this failure. Tian \cite[Theorem~2]{Tian19} derived a sum formula over $\mathbbm{C}$ using matrix and generalized-inverse methods. Makrides and Szemberg \cite[Theorem~1.1]{MS25} gave an elementary proof, valid over an arbitrary field, of the symmetric identity
\begin{equation}\label{TMS-intro}
(n-1)\dim\Bigl(\sum_{i\in[n]}V_i\Bigr)=\sum_{i\in[n]}
\dim\Bigl(\sum_{j\in[n]\setminus\{i\}}V_j\Bigr)-
\dim\Bigl(\bigcap_{i\in[n]}\sum_{j\in[n]\setminus\{i\}}V_j\Bigr),
\end{equation}
which appeared earlier in \cite{Tian19}. Tian \cite[Theorem~3.1]{Tian25} also established recursive sum and intersection formulas and identities involving mixed operations.

Our first aim is to obtain a symmetric, nonrecursive expansion in which every correction term is specified directly by the original subspaces. We achieve this by averaging the ordered form of the two-subspace Grassmann formula. If the subspaces indexed by $S\subseteq[n]\setminus\{i\}$ precede $V_i$ in an ordering, then the dimension contributed by $V_i$ at that step is
\begin{equation}\label{marginal-intro}
\dim(V_i)-\dim\Bigl(V_i\cap\sum_{j\in S}V_j\Bigr).
\end{equation}
Averaging over all orderings gives the symmetric nonrecursive formula
\begin{equation}\label{Sym-Grassmann-intro}
\dim\Bigl(\sum_{i\in[n]}V_i\Bigr)=\sum_{i\in[n]}\dim(V_i)-
\sum_{i\in[n]}\sum_{\emptyset\neq S\subseteq[n]\setminus\{i\}}\frac{\dim\bigl(V_i\cap\sum_{j\in S}V_j\bigr)}{(|S|+1)\binom{n}{|S|+1}}.
\end{equation}
The coefficients and intersection terms are specified directly from the original family, with no recursive application of a formula for fewer subspaces. Each correction records the part of a distinguished subspace already contained in a partial sum, and its coefficient depends only on the number of subspaces in that sum. The formula is invariant under relabeling and reduces to the classical Grassmann formula for $n=2$. An alternative inductive proof is given in~\ref{app:inductive-proof}.

The same averaging procedure gives a weighted extension. An arbitrary probability distribution on orderings yields a generally nonsymmetric formula, while the uniform distribution gives \eqref{Sym-Grassmann-intro}. Passing to annihilators gives a dual formula for the dimension of an intersection. We also compare \eqref{Sym-Grassmann-intro} directly with \eqref{TMS-intro}, showing that the two expressions resolve the same total correction through different families of subspace-intersection data.

A conceptual contribution is the natural interpretation of the symmetric Grassmann-type formula in polymatroid theory and cooperative game theory. The set function r defined, for $S\subseteq[n]$, by
\begin{equation}\label{rank-function-intro}
\mathfrak{r}(S)\coloneq\dim\Bigl(\sum_{i\in S}V_i\Bigr),
\qquad S\subseteq[n],
\end{equation}
with $\mathfrak{r}(\emptyset)=0$, is the rank function of a linearly representable integral polymatroid; in particular, it is normalized, monotone, and submodular \cite{Fujishige05}. The identity
\[
\frac{|S|!(n-|S|-1)!}{n!}=\frac{1}{(|S|+1)\binom{n}{|S|+1}}
\]
shows that the coefficients in \eqref{Sym-Grassmann-intro} are precisely the Shapley coefficients \cite{Shapley53}. Accordingly, the symmetric Grassmann-type formula is the Shapley efficiency identity for the rank function $\mathfrak{r}$, written in terms of subspace intersections; this interpretation is developed in Section~\ref{sec:shapley}.

We next use the correction terms in \eqref{Sym-Grassmann-intro} to construct a dimension-defect hierarchy that resolves the total dimension defect according to the number of participating subspaces. The resulting profile records how average overlap changes as more subspaces are combined. Define the dimension defect by
\begin{equation}\label{defect-intro}
\defop(V_1,\ldots,V_n)\coloneq\sum_{i\in[n]}\dim(V_i)
-\dim\Bigl(\sum_{i\in[n]}V_i\Bigr).
\end{equation}
Grouping the correction terms by the total number of participating subspaces gives the canonical decomposition
\begin{equation}\label{defect-sum-intro}
\defop(V_1,\ldots,V_n)=\sum_{k=2}^{n}A_k,
\end{equation}
where $A_k$ is the average overlap between one distinguished subspace and the sum of $k-1$ other subspaces, taken over all such choices. We prove monotonicity of the dimension-defect profile. Specifically, the average overlap sequence $(A_k)_{k=2}^n$ is nondecreasing with interaction order $k$:
\begin{equation}\label{monotone-intro}
A_2\leq A_3\le\cdots\le A_n.
\end{equation}
Here $A_2$ is the average pairwise intersection dimension, while $A_n$ is the average dimension of the intersection of a distinguished subspace with the sum of the remaining subspaces. The profile therefore detects dependence that is invisible to pairwise intersections alone. Its levels are indexed by the number of participating subspaces, but they do not isolate interactions involving exactly that many subspaces, since overlap among fewer subspaces may also contribute at higher levels.

The successive gaps $B_k\coloneq A_{k+1}-A_k$, for $2\leq k\leq n-1$, give a finer description of this hierarchy. Let $R_k$ denote the average dimension of the sum of a uniformly chosen collection of $k$ subspaces, with $R_0=0$, and set $\Delta_k\coloneq R_k-R_{k-1}$ for $1\leq k\leq n$. We prove that
\begin{equation}
A_k=R_1-\Delta_k \quad (2\leq k\leq n), \qquad
B_k=\Delta_k-\Delta_{k+1} \quad (2\leq k\leq n-1).
\end{equation}
Thus $\Delta_k$ is the average marginal dimension contributed when the number of selected subspaces increases from $k-1$ to $k$, while $A_k$ measures its loss relative to the first step and $B_k$ its decrease at the next step. In particular, the average-rank profile $(R_k)_{k=0}^{n}$ is discretely concave, and the gaps $B_k=2R_k-R_{k-1}-R_{k+1}$, for $2\leq k\leq n-1$, measure its discrete curvature.

The defect decomposition and monotonicity yield the endpoint bounds
\begin{equation}
(n-1)A_2\leq\defop(V_1,\ldots,V_n)\leq(n-1)A_n,
\end{equation}
and the successive gaps give exact remainder formulas for both estimates. A further principal aim is to characterize the subspace arrangements attaining these bounds. We prove that equality in either bound holds precisely when, with $D\coloneq\bigcap_{i\in[n]}V_i$, the quotient subspaces $V_i/D$ form an internal direct sum in $V/D$ (Theorem~\ref{thm:common-core}). This common-core characterization identifies the arrangements whose dependence is entirely accounted for by their shared intersection. Annihilators yield a parallel hierarchy for intersections. Section~\ref{sec:defect} develops these results.

The graded defect profile also connects with representability-specific linear-rank inequalities. Ingleton's inequality \cite{Ingleton71} and Kinser's inequalities \cite{Kinser11} constrain linearly representable polymatroids beyond submodularity. A further principal contribution is an exact geometric decomposition of the Kinser slack. Retaining the nonnegative remainders in Kinser's proof expresses this slack as a sum of explicit quotient dimensions and yields necessary and sufficient geometric conditions for equality (Theorem~\ref{thm:Kinser-slack-decomposition} and Corollary~\ref{cor:Kinser-equality}). This decomposition is related to Friedman and Tootooni's study of failures of distributivity through discoordination \cite[Corollary~3.10(5)]{FT23}.

We connect this geometric decomposition with the defect curvatures by showing that the Kinser slack, averaged over all orderings, equals $(n-2)B_2$. More generally, quotienting by the sum of $k-2$ selected subspaces and averaging over all such selections and all orderings of the remaining subspaces gives $(n-k)B_k$ for $2\leq k\leq n-2$. Consequently, for these values of $k$, $B_k=0$ if and only if every such ordered quotient arrangement saturates the corresponding Kinser inequality. The final gap $B_{n-1}$ has no counterpart in this contraction hierarchy. These averages recover the submodular curvature and yield no additional symmetric inequality; the representability-specific information remains in the individual ordered slacks. Section~\ref{subsec:Kinser} develops this connection.

The average-rank identities have an information-theoretic counterpart. For arbitrary finite-valued random variables, the entropy analogues of $A_k$ and $B_k$ are averages of mutual information and conditional mutual information, respectively. Summing the entropy analogues of $A_k$ over $k$ gives total correlation, and the average entropy profile is discretely concave. For a linear source over a finite field $\mathbbm{F}_q$ with $q$ elements, the entropy analogues are exactly $A_k\log q$ and $B_k\log q$. These correspondences are stated and proved in~\ref{app:information-theoretic}.

The paper is organized as follows. Section~\ref{sec:G-formulas} establishes the symmetric formula, its weighted and dual extensions, and its comparison with the Tian--Makrides--Szemberg formula. Section~\ref{sec:shapley} develops the polymatroid and Shapley values interpretations. Section~\ref{sec:defect} develops the defect hierarchy, the common-core equality characterization, and the average-rank, dual, and Kinser connections. The final section gives a discussion and outlook.

%%%%%%%%%%%%%%%%%%%%%%%%%%%%%%%%%%%%%%%%%%%%%%%%%
\section{Grassmann-type formulas}\label{sec:G-formulas}
%%%%%%%%%%%%%%%%%%%%%%%%%%%%%%%%%%%%%%%%%%%%%%%%%
This section establishes the Grassmann-type formulas underlying the dimension-defect hierarchy. We first derive the symmetric formula by averaging the ordered Grassmann identity, then extend the averaging procedure to arbitrary distributions on orderings. We next obtain a dual formula for intersections and finally compare the symmetric formula with the Tian--Makrides--Szemberg formula \cite{Tian19,MS25}.
%%%%%%%%%%%%%%%%%%%%%%%%%%%%%%%%%%%%%%%%%%%%%%%%%
\subsection{Symmetric formula}\label{subsec:sym-Grassmann}
%%%%%%%%%%%%%%%%%%%%%%%%%%%%%%%%%%%%%%%%%%%%%%%%%
We begin with the standard ordered iteration of the two-subspace Grassmann formula \cite{Axler24}, which will be averaged to obtain the symmetric expansion.

\begin{lemma}[Ordered Grassmann expansion]\label{lem:ordered}
Let $V_1,\ldots,V_n\subseteq V$, and let $\pi\in\mathfrak{S}_n$ be a permutation. Then
\begin{align}\label{ordered-Grassmann}
\dim\Bigl(\sum_{i\in[n]}V_i\Bigr)=\sum_{k\in[n]}
\Bigl[\dim(V_{\pi(k)})-\dim\Bigl(V_{\pi(k)}\cap\sum_{l<k}V_{\pi(l)}\Bigr)\Bigr],
\end{align}
where the sum over $l<1$ is understood to be $\{0\}$.
\end{lemma}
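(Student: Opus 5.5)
The plan is to telescope the partial sums along the ordering $\pi$ and apply the classical Grassmann formula \eqref{Grassmann-classical} at each step. For $0\le k\le n$, define the partial sums
\[
W_k\coloneq\sum_{l\le k}V_{\pi(l)},\qquad W_0\coloneq\{0\}.
\]
Because $\pi$ is a bijection of $[n]$, the last partial sum is $W_n=\sum_{i\in[n]}V_i$. Since $\dim W_0=0$, we have the telescoping identity
\[
\dim\Bigl(\sum_{i\in[n]}V_i\Bigr)=\dim W_n-\dim W_0=\sum_{k\in[n]}\bigl(\dim W_k-\dim W_{k-1}\bigr).
\]

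The key step is to compute each increment. We have $W_k=W_{k-1}+V_{\pi(k)}$. Applying \eqref{Grassmann-classical} to the two subspaces $W_{k-1}$ and $V_{\pi(k)}$ gives
\[
\dim W_k=\dim W_{k-1}+\dim(V_{\pi(k)})-\dim\bigl(V_{\pi(k)}\cap W_{k-1}\bigr).
\]
By definition, $W_{k-1}=\sum_{l<k}V_{\pi(l)}$. So the increment $\dim W_k-\dim W_{k-1}$ is exactly the $k$-th bracket in \eqref{ordered-Grassmann}.

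For $k=1$ we use the stated convention that the empty sum is $\{0\}$. In that case the intersection term vanishes and the increment is simply $\dim(V_{\pi(1)})$, which agrees with the formula. Substituting the increments into the telescoping sum yields \eqref{ordered-Grassmann}.

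There is no real obstacle here. The only points needing care are the empty-sum convention at $k=1$ and the observation that $W_n$ does not depend on $\pi$, which holds because subspace addition is commutative and associative. The argument works over an arbitrary field, since \eqref{Grassmann-classical} does.
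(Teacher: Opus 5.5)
Your proposal is correct and follows the paper's proof essentially verbatim. The paper defines the same partial sums $U_k=\sum_{l=1}^{k}V_{\pi(l)}$ with $U_0=\{0\}$, applies the classical Grassmann formula to $U_{k-1}$ and $V_{\pi(k)}$, and telescopes over $k\in[n]$.
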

\begin{proof}
Set $U_k\coloneq\sum_{l=1}^{k}V_{\pi(l)}$ and $U_0\coloneq\{0\}$. By the classical Grassmann formula,
\begin{equation*}
\dim(U_k)-\dim(U_{k-1})=\dim(V_{\pi(k)})-\dim(U_{k-1}\cap V_{\pi(k)}).
\end{equation*}
Summing over $k\in[n]$ telescopes to \eqref{ordered-Grassmann}.
\end{proof}

For $i\in[n]$ and $\pi\in\mathfrak{S}_n$, let
\begin{equation}\label{precede}
\Pred_{\pi}(i)\coloneq
\{j\in[n]\setminus\{i\}\,\mid\,\pi^{-1}(j)<\pi^{-1}(i)\}
\end{equation}
be the set of indices that precede $i$ in the ordering $\pi$.

For $1\leq k\leq n$, set
\begin{equation}\label{cnk}
c_{n,k}\coloneq\frac{1}{k\binom{n}{k}}.
\end{equation}

\begin{theorem}[Symmetric Grassmann-type formula]\label{thm:symmetric-Grassmann}
Let $V_1,\ldots,V_n\subseteq V$. Then
\begin{equation}\label{sym-Grassmann}
\dim\Bigl(\sum_{i\in[n]}V_i\Bigr)=\sum_{i\in[n]}
\Bigl[\dim(V_i)-\sum_{\emptyset\neq S\subseteq[n]\setminus\{i\}}c_{n,|S|+1}\,\dim\Bigl(V_i\cap\sum_{j\in S}V_j\Bigr)\Bigr].
\end{equation}
\end{theorem}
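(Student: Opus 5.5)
We average the ordered Grassmann expansion of Lemma~\ref{lem:ordered} over all $\pi\in\mathfrak{S}_n$ with the uniform weight $1/n!$. The left-hand side of \eqref{ordered-Grassmann} does not depend on $\pi$, so it is unchanged by averaging. On the right-hand side we reindex by the subspace rather than by the position. The term indexed by position $k$ involves the subspace $V_{\pi(k)}$ and the set $\{\pi(l): l<k\}$, which is exactly $\Pred_\pi(\pi(k))$. Hence \eqref{ordered-Grassmann} can be written as
\[
\dim\Bigl(\sum_{i\in[n]}V_i\Bigr)=\sum_{i\in[n]}\Bigl[\dim(V_i)-\dim\Bigl(V_i\cap\sum_{j\in\Pred_\pi(i)}V_j\Bigr)\Bigr],
\]
where the empty sum is $\{0\}$. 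Averaging over $\pi$ leaves the terms $\dim(V_i)$ unchanged. It remains to compute, for each $i$, the average of $\dim\bigl(V_i\cap\sum_{j\in\Pred_\pi(i)}V_j\bigr)$ over $\pi$.

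The key step is to find the distribution of $\Pred_\pi(i)$ under a uniform random permutation. Fix $i$ and a set $S\subseteq[n]\setminus\{i\}$ with $|S|=s$. The event $\Pred_\pi(i)=S$ means three things: the elements of $S$ occupy positions $1,\dots,s$ in some order, $i$ occupies position $s+1$, and the remaining $n-s-1$ indices follow in some order. There are $s!(n-s-1)!$ such permutations, so
\[
\Pr[\Pred_\pi(i)=S]=\frac{s!(n-s-1)!}{n!}=\frac{1}{(s+1)\binom{n}{s+1}}=c_{n,s+1}.
\]
The middle equality is the elementary identity $(s+1)\binom{n}{s+1}=\frac{n!}{s!(n-s-1)!}$.

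The average of the intersection term is therefore $\sum_{S\subseteq[n]\setminus\{i\}}c_{n,|S|+1}\dim\bigl(V_i\cap\sum_{j\in S}V_j\bigr)$. The summand for $S=\emptyset$ vanishes because $V_i\cap\{0\}=\{0\}$, so the sum may be restricted to $S\neq\emptyset$. Summing over $i$ gives \eqref{sym-Grassmann}.

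There is no genuine obstacle in this argument; the only step needing care is the counting. One must check that the sets $\Pred_\pi(i)$, as $\pi$ ranges over $\mathfrak{S}_n$, partition the permutations correctly. One must also check that the coefficient matches $c_{n,|S|+1}$ as defined in \eqref{cnk}, with index $|S|+1$ and not $|S|$. As a consistency check on the coefficients, $\sum_{s=0}^{n-1}\binom{n-1}{s}c_{n,s+1}=\sum_{s}\frac1n=1$, as a probability distribution must satisfy.
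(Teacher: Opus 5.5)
Your proposal is correct and follows essentially the same route as the paper: average the ordered Grassmann expansion uniformly over $\mathfrak{S}_n$, and identify $\mathtt{p}\bigl(\Pred_{\pi}(i)=S\bigr)=s!(n-s-1)!/n!=c_{n,s+1}$ as the coefficient of each intersection term. Your explicit reindexing by subspace rather than by position is the step the paper spells out in its proof of the weighted version, Theorem~\ref{thm:weighted}.
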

\begin{proof}
Average \eqref{ordered-Grassmann} uniformly over all $n!$ permutations. Fix $i\in[n]$ and $S\subseteq[n]\setminus\{i\}$ with $|S|=s$. The condition $\Pred_{\pi}(i)=S$ means that precisely the elements of $S$ occur before $i$; they can be ordered in $s!$ ways, while the remaining $n-s-1$ elements can be ordered after $i$ in $(n-s-1)!$ ways. Thus, for a uniformly random permutation $\pi$, the probability that exactly the elements of $S$ precede $i$ is
\begin{equation*}
\mathtt{p}\bigl(\Pred_{\pi}(i)=S\bigr)=\frac{s!(n-s-1)!}{n!}=\frac{1}{n\binom{n-1}{s}}=\frac{1}{(s+1)\binom{n}{s+1}}=c_{n,s+1}.
\end{equation*}
Therefore the average contribution of the intersection loss associated with $(i,S)$ is $c_{n,|S|+1}\dim(V_i\cap\sum_{j\in S}V_j)$. Summing over $i$ and all nonempty $S$ gives \eqref{sym-Grassmann}.
\end{proof}

An alternative proof by induction on the number of subspaces is given in~\ref{app:inductive-proof}.

\begin{remark}\label{rem:n2}
For $n=2$, Theorem~\ref{thm:symmetric-Grassmann} reduces exactly to the classical Grassmann formula. Indeed, $c_{2,2}=1/2$, and the same intersection $V_1\cap V_2$ occurs once with distinguished index $1$ and once with distinguished index $2$.
\end{remark}

\begin{example}
For $n=3$, Theorem~\ref{thm:symmetric-Grassmann}
recovers the identity in \cite[Exercise~20, Section~2C]{Axler24}:
\begin{equation}\label{sum-n=3}
\dim(V_1+V_2+V_3)=\sum_{i=1}^{3}\dim(V_i)
-\frac{1}{3}\sum_{1\leq i<j\le3}\dim(V_i\cap V_j)-\frac{1}{3}\sum_{i=1}^{3}
\dim\Bigl(V_i\cap\!\sum_{j\in[3]\setminus\{i\}}V_j\Bigr).
\end{equation}
Here the coefficient $1/3=2\,c_{3,2}$ arises by collecting
the identical terms corresponding to $V_i\cap V_j$ and
$V_j\cap V_i$. More generally, collecting the pairwise
terms over $i<j$ gives the coefficient
$2\,c_{n,2}=1/\binom{n}{2}$.
The first correction records pairwise overlap, while the
second captures, in particular, the genuinely non-distributive
phenomenon that a subspace may meet the sum of two others
nontrivially even when all pairwise intersections vanish.
\end{example}

\begin{example}\label{ex:three-lines}
Let $V=\mathbbm{F}^2$, with standard basis $\{e_1,e_2\}$, and consider the three distinct lines through the origin
\begin{equation*}
V_1=\operatorname{span}\{e_1\}, \qquad
V_2=\operatorname{span}\{e_2\}, \qquad
V_3=\operatorname{span}\{e_1+e_2\}.
\end{equation*}
Then $V_i\cap V_j=\{0\}$ for $i\neq j$, whereas $V_i\cap(V_j+V_k)=V_i$ for each permutation $(i,j,k)$ of $(1,2,3)$. Formula \eqref{sum-n=3} therefore gives
\begin{equation*}
\dim(V_1+V_2+V_3)=3-\frac{1}{3}(1+1+1)=2.
\end{equation*}
Thus the terms involving intersections with partial sums recover exactly the information missed by ordinary inclusion--exclusion.
\end{example}

%%%%%%%%%%%%%%%%%%%%%%%%%%%%%%%%%%%%%%%%%%%%%%%%%
\subsection{Weighted orderings}\label{subsec:weighted}
%%%%%%%%%%%%%%%%%%%%%%%%%%%%%%%%%%%%%%%%%%%%%%%%%
The uniform average is not necessary for the underlying identity. Any probability distribution on orderings produces a valid, generally nonsymmetric, Grassmann-type expansion. This extension is the subspace form of the classical random-order construction \cite{Weber88}.

Let $\mu$ be a probability distribution on the orderings $\mathfrak{S}_n$. For $i\in[n]$ and $S\subseteq[n]\setminus\{i\}$, let $\mathtt{p}_{\mu}(i,S)$ be the probability that exactly the elements of $S$ occur before $i$ in an ordering chosen according to $\mu$; explicitly,
\begin{equation}\label{p-mu}
\mathtt{p}_{\mu}(i,S)\coloneq\mu\bigl(\{\pi\in\mathfrak{S}_n\,\mid\,\Pred_{\pi}(i)=S\}\bigr).
\end{equation}

\begin{theorem}[Weighted Grassmann-type formula]\label{thm:weighted}
Let $V_1,\ldots,V_n\subseteq V$. Then
\begin{align}\label{weighted-Grassmann}
\dim\Bigl(\sum_{i\in[n]}V_i\Bigr)=\sum_{i\in[n]}\dim(V_i)
-\sum_{i\in[n]}\sum_{S\subseteq[n]\setminus\{i\}}\mathtt{p}_{\mu}(i,S)
\dim\Bigl(V_i\cap\sum_{j\in S}V_j\Bigr).
\end{align}
The term $S=\emptyset$ vanishes because the empty sum of subspaces is $\{0\}$.
\end{theorem}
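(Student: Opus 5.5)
The plan is to take the expectation of the ordered expansion in Lemma~\ref{lem:ordered} with respect to $\mu$ instead of the uniform distribution, exactly as in the proof of Theorem~\ref{thm:symmetric-Grassmann}. The left-hand side of \eqref{ordered-Grassmann} does not depend on $\pi$. Since $\mu$ is a probability distribution, we have $\sum_{\pi}\mu(\pi)\dim\bigl(\sum_{i}V_i\bigr)=\dim\bigl(\sum_i V_i\bigr)$. On the right-hand side, the first step is to reindex the sum over positions $k$ as a sum over indices $i=\pi(k)$. The set $\{\pi(l)\mid l<k\}$ is exactly $\Pred_{\pi}(i)$. Hence, for each fixed $\pi$, \eqref{ordered-Grassmann} becomes
\begin{equation*}
\dim\Bigl(\sum_{i\in[n]}V_i\Bigr)=\sum_{i\in[n]}\dim(V_i)-\sum_{i\in[n]}\dim\Bigl(V_i\cap\sum_{j\in\Pred_{\pi}(i)}V_j\Bigr).
\end{equation*}

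Next I multiply by $\mu(\pi)$ and sum over $\pi\in\mathfrak{S}_n$. The term $\sum_i\dim(V_i)$ is also independent of $\pi$, so it survives unchanged. For the correction term, I interchange the finite sums over $\pi$ and $i$. For fixed $i$, I partition $\mathfrak{S}_n$ according to the value $S=\Pred_{\pi}(i)$, which ranges over the subsets of $[n]\setminus\{i\}$. On each block $\{\pi\mid\Pred_\pi(i)=S\}$ the summand $\dim\bigl(V_i\cap\sum_{j\in S}V_j\bigr)$ is constant. Its total $\mu$-mass is $\mathtt{p}_{\mu}(i,S)$ by \eqref{p-mu}. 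This gives \eqref{weighted-Grassmann}.

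Finally, I note that the $S=\emptyset$ term vanishes because $V_i\cap\{0\}=\{0\}$. There is no real obstacle here. The only point needing care is checking that the blocks $\{\pi\mid\Pred_\pi(i)=S\}$ genuinely partition $\mathfrak{S}_n$ for each fixed $i$, and this holds because $\Pred_\pi(i)$ is a well-defined function of $\pi$. As a sanity check, for uniform $\mu$ the identity specializes to Theorem~\ref{thm:symmetric-Grassmann} via the computation $\mathtt{p}(i,S)=c_{n,|S|+1}$ carried out there.
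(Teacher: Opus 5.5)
Your proposal is correct and follows essentially the same route as the paper. The paper also rewrites the ordered expansion of Lemma~\ref{lem:ordered} in predecessor form, takes the $\mu$-expectation, and groups the orderings by $S=\Pred_{\pi}(i)$ to obtain the weights $\mathtt{p}_{\mu}(i,S)$.
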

\begin{proof}
For any fixed ordering $\pi$, Lemma~\ref{lem:ordered} adds the dimensions of the $V_i$ and subtracts, for each $i$, its intersection with the sum of the subspaces preceding it. Thus \eqref{ordered-Grassmann} can be written as
\begin{equation*}
\dim\Bigl(\sum_{i\in[n]}V_i\Bigr)=\sum_{i\in[n]}\dim(V_i)
-\sum_{i\in[n]}\dim\Bigl(V_i\cap\sum_{j\in\Pred_{\pi}(i)}V_j\Bigr).
\end{equation*}
The left-hand side and the first sum do not depend on $\pi$. Taking the expectation over $\pi$ with distribution $\mu$, and grouping the orderings by the set $S=\Pred_{\pi}(i)$, gives
\begin{equation*}
\mathtt{E}_{\mu}\Bigl[\dim\Bigl(V_i\cap\sum_{j\in\Pred_{\pi}(i)}V_j\Bigr)\Bigr]=
\sum_{S\subseteq[n]\setminus\{i\}}\mathtt{p}_{\mu}(i,S)
\dim\Bigl(V_i\cap\sum_{j\in S}V_j\Bigr).
\end{equation*}
Substituting this identity for each $i$ proves
\eqref{weighted-Grassmann}.
\end{proof}

\begin{example}\label{ex:weighted}
Let $V=\mathbbm{F}^2$, $V_1=\operatorname{span}\{e_1\}$, and $V_2=V$. Give the ordering $(1,2)$ probability $3/4$ and the ordering $(2,1)$ probability $1/4$. In the first ordering, adding $V_1$ and then $V_2$ contributes $1+(2-1)=2$; in the second, adding $V_2$ and then $V_1$ contributes $2+(1-1)=2$. Their weighted average is
\begin{equation*}
\frac{3}{4}[1+(2-1)]+\frac{1}{4}[2+(1-1)]=2.
\end{equation*}
In the notation of the theorem, $\mathtt{p}_{\mu}(2,\{1\})=3/4$ and $\mathtt{p}_{\mu}(1,\{2\})=1/4$. Since $V_1\cap V_2=V_1$ has dimension $1$, formula \eqref{weighted-Grassmann} gives the same result:
\begin{equation*}
\dim(V_1+V_2)=1+2-\frac{3}{4}(1)-\frac{1}{4}(1)=2.
\end{equation*}
The weights change the average contributions of $V_1$ and $V_2$ to $3/4$ and $5/4$, respectively, while their sum remains $2$.
\end{example}

\begin{remark}\label{rem:canonical}
Theorem~\ref{thm:weighted} exhibits a family of exact Grassmann-type formulas parametrized by distributions on orderings. The uniform distribution is the canonical label-symmetric choice. In this case $\mathtt{p}_{\mu}(i,S)=c_{n,|S|+1}$, recovering Theorem~\ref{thm:symmetric-Grassmann}.
\end{remark}

%%%%%%%%%%%%%%%%%%%%%%%%%%%%%%%%%%%%%%%%%%%%%%%%%
\subsection{A dual formula for intersections}\label{subsec:dual-formula}
%%%%%%%%%%%%%%%%%%%%%%%%%%%%%%%%%%%%%%%%%%%%%%%%%
The symmetric Grassmann-type formula \eqref{sym-Grassmann} has a dual counterpart obtained by the standard annihilator construction \cite{Axler24}. Let $V^\vee\coloneq\operatorname{Hom}_{\mathbbm{F}}(V,\mathbbm{F})$ be the algebraic dual of $V$. For a subspace $U\subseteq V$, write $U^{\circ}\coloneq\{f\in V^\vee\,\mid\,f(u)=0,~\forall\,u\in U\}$. For every nonempty $S\subseteq[n]$, one has
\begin{equation}\label{annihilator}
\Bigl(\sum_{i\in S}V_i\Bigr)^{\circ}=\bigcap_{i\in S}V_i^{\circ}, \qquad
\Bigl(\bigcap_{i\in S}V_i\Bigr)^{\circ}=\sum_{i\in S}V_i^{\circ},
\end{equation}
and $\dim(U^{\circ})=\dim(V)-\dim(U)$.

\begin{theorem}[Dual symmetric Grassmann-type formula]\label{thm:dual-Grassmann}
Let $V_1,\ldots,V_n\subseteq V$. Then
\begin{equation}\label{dual-Grassmann}
\dim\Bigl(\bigcap_{i\in[n]}V_i\Bigr)=\sum_{i\in[n]}\dim(V_i)-\sum_{i\in[n]}\sum_{\emptyset\neq S\subseteq[n]\setminus\{i\}}c_{n,|S|+1}\,\dim\Bigl(V_i+\bigcap_{j\in S}V_j\Bigr).
\end{equation}
\end{theorem}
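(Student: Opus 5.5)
The plan is to apply Theorem~\ref{thm:symmetric-Grassmann} to the annihilator family $W_i\coloneq V_i^{\circ}\subseteq V^\vee$, $i\in[n]$, and then translate each term back using \eqref{annihilator} and the relation $\dim(U^{\circ})=\dim(V)-\dim(U)$. Write $N\coloneq\dim(V)=\dim(V^\vee)$; this is finite because $V$ is finite-dimensional.

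The translation goes term by term.
\begin{itemize}
\item By \eqref{annihilator}, $\sum_{i\in[n]}W_i=\bigl(\bigcap_{i\in[n]}V_i\bigr)^{\circ}$, so the left-hand side of \eqref{sym-Grassmann} for the family $(W_i)$ equals $N-\dim\bigl(\bigcap_{i}V_i\bigr)$.
\item Each single term is $\dim(W_i)=N-\dim(V_i)$.
\item For the correction terms, use both identities in \eqref{annihilator}:
\begin{equation*}
W_i\cap\sum_{j\in S}W_j=V_i^{\circ}\cap\Bigl(\bigcap_{j\in S}V_j\Bigr)^{\circ}=\Bigl(V_i+\bigcap_{j\in S}V_j\Bigr)^{\circ}.
\end{equation*}
Hence $\dim\bigl(W_i\cap\sum_{j\in S}W_j\bigr)=N-\dim\bigl(V_i+\bigcap_{j\in S}V_j\bigr)$.
\end{itemize}

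Substituting these into \eqref{sym-Grassmann} gives
\begin{equation*}
N-\dim\Bigl(\bigcap_{i}V_i\Bigr)=nN-\sum_{i}\dim(V_i)-N\,\Sigma+\sum_{i}\sum_{\emptyset\neq S\subseteq[n]\setminus\{i\}}c_{n,|S|+1}\dim\Bigl(V_i+\bigcap_{j\in S}V_j\Bigr),
\end{equation*}
where $\Sigma\coloneq\sum_{i}\sum_{\emptyset\neq S\subseteq[n]\setminus\{i\}}c_{n,|S|+1}$ is the total weight of the coefficients.

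The only nontrivial step is to compute $\Sigma$, and I expect it to equal $n-1$. The proof of Theorem~\ref{thm:symmetric-Grassmann} shows that, for fixed $i$, the number $c_{n,|S|+1}$ is the probability that $\Pred_\pi(i)=S$. Summing over all $S\subseteq[n]\setminus\{i\}$, including $S=\emptyset$, therefore gives $1$. The excluded term $S=\emptyset$ contributes $c_{n,1}=1/n$, so each $i$ contributes $1-1/n$, and $\Sigma=n(1-1/n)=n-1$. Alternatively, one can check this directly: $\sum_{s=1}^{n-1}\binom{n-1}{s}c_{n,s+1}=\sum_{s=1}^{n-1}\frac1n=\frac{n-1}{n}$.

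With $\Sigma=n-1$, all multiples of $N$ cancel, since $N=nN-(n-1)N$. Rearranging then gives exactly \eqref{dual-Grassmann}. The main obstacle is only this bookkeeping of the constant terms; everything else is a direct substitution.
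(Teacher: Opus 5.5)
Your proof is correct and is the paper's argument: apply Theorem~\ref{thm:symmetric-Grassmann} to the annihilators $V_1^{\circ},\ldots,V_n^{\circ}$, translate each term via \eqref{annihilator}, and use that the total coefficient weight is $n-1$, so the ambient-dimension terms cancel. Your probabilistic count of that weight (total probability $1$ minus the $S=\emptyset$ term $c_{n,1}=1/n$) is a slightly slicker form of the paper's direct binomial sum, which you also include as a check.
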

\begin{proof}
Apply Theorem~\ref{thm:symmetric-Grassmann} in $V^\vee$ to $V_1^{\circ},\ldots,V_n^{\circ}$. Let $d=\dim(V)$. Using \eqref{annihilator},
\begin{equation*}
\dim\Bigl(V_i^{\circ}\cap\sum_{j\in S}V_j^{\circ}\Bigr)=
d-\dim\Bigl(V_i+\bigcap_{j\in S}V_j\Bigr).
\end{equation*}
Moreover,
\begin{equation*}
\dim\Bigl(\sum_{i\in[n]}V_i^{\circ}\Bigr)=d-\dim\Bigl(\bigcap_{i\in[n]}V_i\Bigr),
\qquad \dim(V_i^{\circ})=d-\dim(V_i).
\end{equation*}
It remains only to observe that the sum of all coefficients in the double correction term equals $n-1$. Indeed,
\begin{align*}
\sum_{i\in[n]}
\sum_{\emptyset\neq S\subseteq[n]\setminus\{i\}}c_{n,|S|+1}=\sum_{i\in[n]}
\sum_{\emptyset\neq S\subseteq[n]\setminus\{i\}}
\frac{1}{(|S|+1)\binom{n}{|S|+1}}&=\sum_{i\in[n]}\sum_{s=1}^{n-1}
\binom{n-1}{s}\frac{1}{(s+1)\binom{n}{s+1}}\\
&=\sum_{i\in[n]}\sum_{s=1}^{n-1}\frac{1}{n}=n-1.
\end{align*}
The ambient-dimension terms therefore cancel, leaving \eqref{dual-Grassmann}.
\end{proof}

\begin{remark}
For $n=2$, formula \eqref{dual-Grassmann} gives
\begin{equation*}
\dim(V_1\cap V_2)=\dim(V_1)+\dim(V_2)-\dim(V_1+V_2),
\end{equation*}
which is again the classical Grassmann identity.
\end{remark}

%%%%%%%%%%%%%%%%%%%%%%%%%%%%%%%%%%%%%%%%%%%%%%%%%
\subsection{Comparison with the Tian--Makrides--Szemberg formula}\label{subsec:TMS-comparison}
%%%%%%%%%%%%%%%%%%%%%%%%%%%%%%%%%%%%%%%%%%%%%%%%%
We next compare Theorem~\ref{thm:symmetric-Grassmann} directly with the formula of Tian \cite{Tian19} and Makrides--Szemberg \cite{MS25}.

For each $i\in[n]$, set
\begin{equation}\label{Wi-Y}
W_i\coloneq\sum_{j\in[n]\setminus\{i\}}V_j,
\qquad Y\coloneq\bigcap_{i\in[n]}W_i.
\end{equation}
In this notation, the Tian--Makrides--Szemberg identity recalled in \eqref{TMS-intro} takes the form
\begin{equation}\label{TMS-formula}
(n-1)\dim\Bigl(\sum_{i\in[n]}V_i\Bigr)=
\sum_{i\in[n]}\dim(W_i)-\dim(Y).
\end{equation}
Makrides and Szemberg \cite[Corollary~2.2]{MS25} also record the equivalent identity
\begin{equation}\label{MS-corollary}
\dim\Bigl(\sum_{i\in[n]}V_i\Bigr)+\sum_{i\in[n]}\dim(V_i\cap W_i)=\sum_{i\in[n]}\dim(V_i)+\dim(Y).
\end{equation}
Formula~\eqref{MS-corollary} provides a convenient way to compare their global correction with our partial-sum corrections. The following identity is the direct algebraic comparison of these two expressions for the same defect.

\begin{theorem}\label{thm:TMS-comparison}
Let $V_1,\ldots,V_n\subseteq V$, and let $W_i$ and $Y$ be defined by \eqref{Wi-Y}. Then
\begin{equation}\label{TMS-comparison}
\sum_{i\in[n]}\sum_{\emptyset\neq S\subseteq[n]\setminus\{i\}}c_{n,|S|+1}\,\dim\Bigl(V_i\cap\sum_{j\in S}V_j\Bigr)=\sum_{i\in[n]}\dim(V_i\cap W_i)-\dim(Y).
\end{equation}
\end{theorem}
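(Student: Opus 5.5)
Both sides of \eqref{TMS-comparison} equal the same quantity, namely the dimension defect $\sum_{i\in[n]}\dim(V_i)-\dim\bigl(\sum_{i\in[n]}V_i\bigr)$. The plan is to compute each side separately in terms of this defect, using only results already available, so that no new subspace-theoretic argument is needed.

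\textbf{Left-hand side.} I would rearrange Theorem~\ref{thm:symmetric-Grassmann}. Moving the double correction term to one side and $\dim(\sum_i V_i)$ to the other gives
\begin{equation*}
\sum_{i\in[n]}\sum_{\emptyset\neq S\subseteq[n]\setminus\{i\}}c_{n,|S|+1}\,\dim\Bigl(V_i\cap\sum_{j\in S}V_j\Bigr)=\sum_{i\in[n]}\dim(V_i)-\dim\Bigl(\sum_{i\in[n]}V_i\Bigr).
\end{equation*}

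\textbf{Right-hand side.} I would rearrange the Makrides--Szemberg identity \eqref{MS-corollary}, which gives
\begin{equation*}
\sum_{i\in[n]}\dim(V_i\cap W_i)-\dim(Y)=\sum_{i\in[n]}\dim(V_i)-\dim\Bigl(\sum_{i\in[n]}V_i\Bigr).
\end{equation*}
Comparing the two displays proves \eqref{TMS-comparison}.

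\textbf{Checking \eqref{MS-corollary} from \eqref{TMS-formula}.} This is the only point needing care, if one wants to rely only on the form \eqref{TMS-formula} of the cited result. Write $V_{[n]}=\sum_{i\in[n]}V_i$. Since $V_i+W_i=V_{[n]}$, the classical Grassmann formula \eqref{Grassmann-classical} gives
\begin{equation*}
\dim(W_i)=\dim(V_{[n]})-\dim(V_i)+\dim(V_i\cap W_i).
\end{equation*}
Summing over $i\in[n]$ and substituting into \eqref{TMS-formula} yields
\begin{equation*}
(n-1)\dim(V_{[n]})=n\dim(V_{[n]})-\sum_{i\in[n]}\dim(V_i)+\sum_{i\in[n]}\dim(V_i\cap W_i)-\dim(Y).
\end{equation*}
Cancelling $(n-1)\dim(V_{[n]})$ from both sides gives exactly \eqref{MS-corollary}.

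The argument is therefore a short identification of two expressions for the defect, and no step should present a real obstacle.
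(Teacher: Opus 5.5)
Your proposal is correct and follows the paper's proof: rearrange Theorem~\ref{thm:symmetric-Grassmann} and \eqref{MS-corollary} so that each side of \eqref{TMS-comparison} equals the dimension defect. Your extra derivation of \eqref{MS-corollary} from \eqref{TMS-formula}, using $V_i+W_i=\sum_{j\in[n]}V_j$, is also correct; the paper omits this step and simply cites \cite[Corollary~2.2]{MS25}.
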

\begin{proof}
Rearranging Theorem~\ref{thm:symmetric-Grassmann} shows that the left-hand side of \eqref{TMS-comparison} is
\begin{equation*}
\sum_{i\in[n]}\dim(V_i)-\dim\Bigl(\sum_{i\in[n]}V_i\Bigr).
\end{equation*}
On the other hand, rearranging \eqref{MS-corollary} gives
\begin{equation*}
\sum_{i\in[n]}\dim(V_i)-\dim\Bigl(\sum_{i\in[n]}V_i\Bigr)=
\sum_{i\in[n]}\dim(V_i\cap W_i)-\dim(Y),
\end{equation*}
which proves the claim.
\end{proof}

\begin{remark}\label{rem:TMS-structural}
Theorem~\ref{thm:TMS-comparison} makes explicit that the two identities \eqref{sym-Grassmann} and \eqref{TMS-formula} resolve the same total dimension correction through different subspace data. The Tian--Makrides--Szemberg expression uses the maximal proper sums $W_i$ and their common intersection $Y$. The symmetric Grassmann-type identity instead uses intersections of a distinguished subspace with partial sums indexed by all nonempty subsets of the remaining indices. These terms can be grouped by cardinality to form the defect profile studied below.
\end{remark}

%%%%%%%%%%%%%%%%%%%%%%%%%%%%%%%%%%%%%%%%%%%%%%%%%
\section{Representable polymatroids and Shapley values}\label{sec:shapley}
%%%%%%%%%%%%%%%%%%%%%%%%%%%%%%%%%%%%%%%%%%%%%%%%%
We now make explicit the combinatorial structure behind Theorem~\ref{thm:symmetric-Grassmann} and interpret its terms as an allocation of the total dimension among the participating subspaces. The abstraction of linear dependence into matroidal structure goes back to Whitney \cite{Whitney35}. In the present setting, the ground elements are themselves subspaces, and the corresponding rank data are described by an integral polymatroid. For each $S\subseteq[n]$, define the rank function
\begin{equation}\label{rank-function-S}
\mathfrak{r}(S)\coloneq\dim\Bigl(\sum_{i\in S}V_i\Bigr),
\end{equation}
with $\mathfrak{r}(\emptyset)=0$.

The function $\mathfrak{r}:2^{[n]}\to\mathbbm{Z}_{\geq0}$ defined in \eqref{rank-function-S} is the rank function of a linearly representable integral polymatroid \cite{Fujishige05,Kinser11}. For completeness, we verify its defining properties. Normalization and monotonicity are immediate. For $S\subseteq[n]$, write $U_S\coloneq\sum_{i\in S}V_i$. For $A,B\subseteq[n]$, Grassmann's formula gives
\begin{equation}
\dim(U_A)+\dim(U_B)=\dim(U_A+U_B)+\dim(U_A\cap U_B).
\end{equation}
Since $U_A+U_B=U_{A\cup B}$ and $U_{A\cap B}\subseteq U_A\cap U_B$, it follows that
\begin{equation}
\mathfrak{r}(A)+\mathfrak{r}(B)\geq\mathfrak{r}(A\cup B)+\mathfrak{r}(A\cap B),
\end{equation}
which proves submodularity. Integrality follows from integer-valuedness of dimension, and linear representability is supplied by the subspaces $V_1,\ldots,V_n$ themselves.

Integral polymatroids admit several related combinatorial representations. Discrete polymatroids were developed systematically in \cite{HH02}, while the natural-matroid construction associated with an integer polymatroid was studied more recently in \cite{BCF23}. Here we retain the subspace representation itself, since this realization gives the Grassmann intersection terms appearing in Theorem~\ref{thm:symmetric-Grassmann}.

To interpret $\mathfrak{r}$ as a cooperative game, regard each index $i$ as a player represented by the subspace $V_i$, and $\mathfrak{r}(S)$ as the worth of the coalition $S$. The allocation question is how much of the total dimension $\mathfrak{r}([n])$ should be credited to each player. For this purpose, consider the new dimension contributed when player $i$ joins a coalition $S$ not containing it, i.e., $S\subseteq[n]\setminus\{i\}$. Applying the two-subspace Grassmann formula to $V_i$ and $\sum_{j\in S}V_j$ gives
\begin{equation}\label{rank-marginal}
\mathfrak{r}(S\cup\{i\})-\mathfrak{r}(S)
=\dim(V_i)-\dim\Bigl(V_i\cap\sum_{j\in S}V_j\Bigr).
\end{equation}
Thus the marginal contribution of $i$ is its subspace dimension minus the overlap with the sum already formed by the coalition. This contribution depends on which players have joined before $i$. The Shapley value assigns a share to each player by averaging its marginal contribution over all possible orderings.

More generally, for a cooperative game $v:2^{[n]}\to\mathbbm{R}$ with $v(\emptyset)=0$, the Shapley value of player $i$ is \cite{Shapley53}
\begin{equation}\label{Shapley-value}
\phi_i(v)=\sum_{S\subseteq[n]\setminus\{i\}}
\frac{|S|!(n-|S|-1)!}{n!}\bigl(v(S\cup\{i\})-v(S)\bigr).
\end{equation}
Here the coefficient is the probability that exactly the players in $S$ precede $i$ in a uniformly random ordering. The resulting shares satisfy the efficiency identity
\begin{equation}\label{Shapley-efficiency}
\sum_{i\in[n]}\phi_i(v)=v([n]),
\end{equation}
so the entire worth of the grand coalition is allocated.

The interpretation of cooperative-game values through random orderings and more general probability distributions was developed systematically in \cite{Weber88}. Shapley-type values in a matroidal setting, where the feasible coalitions are constrained by a matroid, were studied in \cite{BDJL01}. In the present setting, every subset of $[n]$ is an admissible coalition, and the subspaces determine its worth through $\mathfrak{r}$.

Combining the marginal identity \eqref{rank-marginal} with the Shapley weights expresses each player's share through averaged intersection losses. The following proposition identifies these weights with $c_{n,|S|+1}$ and shows that allocating the total dimension recovers the symmetric Grassmann formula.

\begin{proposition}[Shapley--Grassmann correspondence]
\label{prop:Shapley}
For the rank function $\mathfrak{r}$ in \eqref{rank-function-S},
\begin{equation}\label{Shapley-value-Grassmann}
\phi_i(\mathfrak{r})=\dim(V_i)-
\sum_{\emptyset\neq S\subseteq[n]\setminus\{i\}}
c_{n,|S|+1}\,\dim\Bigl(V_i\cap\sum_{j\in S}V_j\Bigr).
\end{equation}
Consequently, Theorem~\ref{thm:symmetric-Grassmann} is precisely the Shapley efficiency identity \eqref{Shapley-efficiency} for the linearly representable polymatroid rank function $\mathfrak{r}$, expressed through Grassmann intersection losses.
\end{proposition}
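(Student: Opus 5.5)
The plan is to substitute the marginal identity \eqref{rank-marginal} directly into the Shapley formula \eqref{Shapley-value} with $v=\mathfrak{r}$, and then split the result into a dimension term and an intersection term.

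\textbf{Weights.} The first step is to identify the Shapley weights with the coefficients $c_{n,|S|+1}$. For $S\subseteq[n]\setminus\{i\}$ with $|S|=s$, the same computation as in the proof of Theorem~\ref{thm:symmetric-Grassmann} gives
\[
\frac{s!(n-s-1)!}{n!}=\frac{1}{(s+1)\binom{n}{s+1}}=c_{n,s+1}.
\]

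\textbf{Splitting.} Substituting \eqref{rank-marginal}, $\phi_i(\mathfrak{r})$ becomes
\[
\sum_{S\subseteq[n]\setminus\{i\}}c_{n,|S|+1}\dim(V_i)\;-\;\sum_{S\subseteq[n]\setminus\{i\}}c_{n,|S|+1}\dim\Bigl(V_i\cap\sum_{j\in S}V_j\Bigr).
\]
In the second sum, the term with $S=\emptyset$ vanishes because the empty sum of subspaces is $\{0\}$. This leaves exactly the correction term in \eqref{Shapley-value-Grassmann}.

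\textbf{Normalization of the weights.} For the first sum, we need $\sum_{S\subseteq[n]\setminus\{i\}}c_{n,|S|+1}=1$. This holds because the weights are the probabilities of the events $\Pred_\pi(i)=S$, and these events partition $\mathfrak{S}_n$. As a direct check, grouping by $s=|S|$ gives
\[
\sum_{s=0}^{n-1}\binom{n-1}{s}\frac{1}{n\binom{n-1}{s}}=\sum_{s=0}^{n-1}\frac1n=1.
\]
This parallels the computation in the proof of Theorem~\ref{thm:dual-Grassmann}, now including the $s=0$ term. The first sum therefore equals $\dim(V_i)$, which establishes \eqref{Shapley-value-Grassmann}.

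\textbf{Consequence.} For the final claim, sum \eqref{Shapley-value-Grassmann} over $i\in[n]$ and apply the efficiency identity \eqref{Shapley-efficiency}, $\sum_i\phi_i(\mathfrak{r})=\mathfrak{r}([n])=\dim\bigl(\sum_i V_i\bigr)$. The resulting equation is literally \eqref{sym-Grassmann}.

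Efficiency itself can be obtained the same way Theorem~\ref{thm:symmetric-Grassmann} was: average the telescoping identity of Lemma~\ref{lem:ordered} over all orderings. So the efficiency statement and the symmetric formula are the same computation read in two ways.

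There is no real obstacle here. The only point needing care is the bookkeeping of the $S=\emptyset$ term: it contributes the full $c_{n,1}\dim(V_i)=\tfrac1n\dim(V_i)$ to the dimension part but nothing to the intersection part, and the weight normalization must include this $s=0$ term.
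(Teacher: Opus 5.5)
Your proposal is correct and follows essentially the same route as the paper: substitute the marginal identity \eqref{rank-marginal} into \eqref{Shapley-value}, identify the weights with $c_{n,|S|+1}$, use that they sum to one so the $\dim(V_i)$ terms collapse, drop the vanishing $S=\emptyset$ intersection term, and then sum over $i$ using efficiency. Your explicit check of the weight normalization (including the $s=0$ term) and your remark that efficiency comes from averaging Lemma~\ref{lem:ordered} simply spell out steps the paper states without computation.
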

\begin{proof}
Substitute the Grassmann marginal identity \eqref{rank-marginal} into \eqref{Shapley-value} and use
\begin{equation*}
\frac{|S|!(n-|S|-1)!}{n!}=\frac{1}{(|S|+1)\binom{n}{|S|+1}}=c_{n,|S|+1}.
\end{equation*}
For each fixed $i$, these weights sum to one over $S\subseteq[n]\setminus\{i\}$, so the terms involving $\dim(V_i)$ combine to give $\dim(V_i)$. The term indexed by $S=\emptyset$ has zero intersection loss, yielding \eqref{Shapley-value-Grassmann}. Summing over $i$ and using \eqref{Shapley-efficiency} gives Theorem~\ref{thm:symmetric-Grassmann}.
\end{proof}

\begin{remark}[Expected marginal dimension]\label{rem:expected}
In the predecessor notation, the allocation to $V_i$ takes
the form
\begin{equation}
\phi_i(\mathfrak{r})=\mathtt{E}\Bigl[\dim(V_i)-
\dim\Bigl(V_i\cap\sum_{j\in\Pred_{\pi}(i)}V_j\Bigr)\Bigr],
\end{equation}
where $\pi$ is uniformly distributed on $\mathfrak{S}_n$.
This allocated share need not be an integer.
For instance, in Example~\ref{ex:three-lines}, the first
two lines in any ordering each contribute one new dimension,
whereas the last contributes none. Each line therefore
receives a share of $2/3$ of the total dimension $2$.
\end{remark}

\begin{example}\label{ex:polymatroid-Shapley}
Let $e_1,e_2,e_3$ be the standard basis of $\mathbbm{F}^3$, and set
\[
V_1=\operatorname{span}\{e_1,e_2\},\qquad
V_2=\operatorname{span}\{e_1,e_3\},\qquad
V_3=\operatorname{span}\{e_1,e_2+e_3\}.
\]
Each plane has dimension $2$, any two span $\mathbbm{F}^3$, and all pairwise and triple intersections equal $\operatorname{span}\{e_1\}$. Ordinary inclusion--exclusion incorrectly gives $6-3+1=4$. For distinct $i,j,l\in[3]$, one has $\dim(V_i\cap V_j)=\dim(V_i\cap V_l)=1$ and $\dim\bigl(V_i\cap(V_j+V_l)\bigr)=2$. Since $c_{3,2}=1/6$ and $c_{3,3}=1/3$, the symmetric Grassmann-type formula gives
\[
\dim(V_1+V_2+V_3)=\sum_{i=1}^{3}
\Bigl[2-\Bigl(\frac16+\frac16+\frac23\Bigr)\Bigr]=3.
\]
In a uniformly random ordering, each plane contributes $2$, $1$, or $0$ new dimensions with equal probabilities. Proposition~\ref{prop:Shapley} identifies each Grassmann contribution with its Shapley share:
\[
\underbrace{2-\Bigl(\frac{1}{6}+\frac{1}{6}+\frac{2}{3}\Bigr)}_{\text{dimension minus averaged intersection loss}}=\underbrace{\frac{1}{3}\,(2+1+0)}_{\text{average marginal dimension}}=\phi_i(\mathfrak{r})=1.
\]
The rank function satisfies $\mathfrak{r}(\{i\})=2$ and $\mathfrak{r}(S)=3$ for $|S|\geq2$. Its independence and base polytopes \cite{Fujishige05} are
\begin{align*}
P(\mathfrak{r})&\coloneq
\Bigl\{x\in\mathbbm{R}_{\geq0}^{3}\;\Big|\;
\sum_{i\in S}x_i\leq\mathfrak{r}(S) \quad\forall\,S\subseteq[3]\Bigr\}
=\bigl\{x\in[0,2]^3\mid x_1+x_2+x_3\leq3\bigr\}, \\
\mathcal{B}(\mathfrak{r})
&=\bigl\{x\in[0,2]^3\mid x_1+x_2+x_3=3\bigr\}.
\end{align*}
The six vertices of $\mathcal{B}(\mathfrak{r})$ are the ordered marginal allocations, the permutations of $(2,1,0)$. Their centre $(1,1,1)$ is the Shapley allocation, whose coordinate sum recovers the dimension $3$ by Shapley efficiency; see Figure~\ref{fig:polymatroid-Shapley}.
\begin{figure}[t]
\centering
\includegraphics[width=\textwidth]{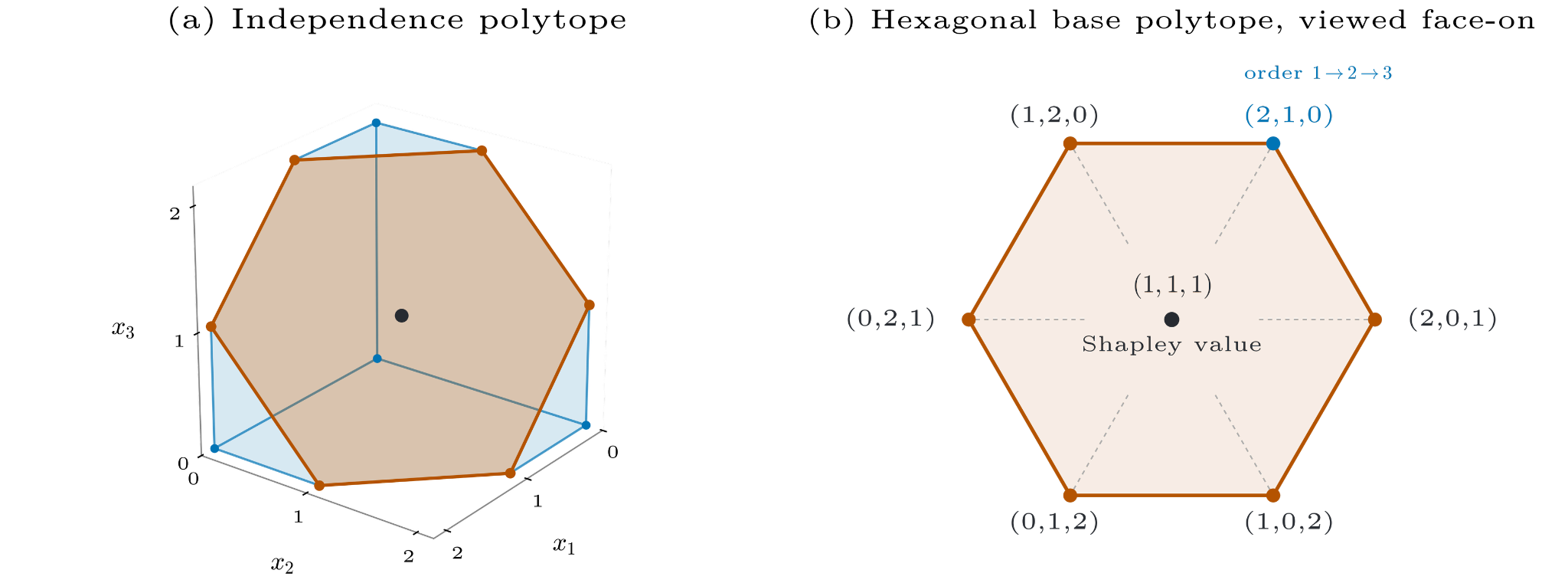}
\caption{Grassmann--Shapley correspondence for three planes with a common line. (a) The independence polytope $P(\mathfrak{r})$, with its base $\mathcal{B}(\mathfrak{r})$ in orange. (b) The six ordered allocations form a hexagon with the Shapley value allocation $(1,1,1)$ at its centre.}
\label{fig:polymatroid-Shapley}
\end{figure}
\end{example}

%%%%%%%%%%%%%%%%%%%%%%%%%%%%%%%%%%%%%%%%%%%%%%%%%
\section{Graded dimension-defect hierarchy} \label{sec:defect}
%%%%%%%%%%%%%%%%%%%%%%%%%%%%%%%%%%%%%%%%%%%%%%%%%
Theorem~\ref{thm:symmetric-Grassmann} naturally decomposes the failure of dimension additivity as a sum of intersection losses. Grouping these corrections by the number of subspaces involved refines the total dimension defect into a graded profile: the total defect records how much dimension is lost, while the profile shows at which interaction scales that loss becomes visible. We now define this profile and study the information it retains.

\begin{definition}[Dimension defect]\label{def:defect}
For $V_1,\ldots,V_n\subseteq V$, define
\begin{equation}\label{defect-dim}
\defop(V_1,\ldots,V_n)\coloneq\sum_{i\in[n]}\dim(V_i)-
\dim\Bigl(\sum_{i\in[n]}V_i\Bigr).
\end{equation}
For $2\leq k\leq n$, define the $k$-th graded interaction defect by
\begin{equation}\label{kth-defect-Ak}
A_k(V_1,\ldots,V_n)\coloneq c_{n,k} \sum_{\substack{T\subseteq[n]\\|T|=k}}
\sum_{i\in T}\dim\Bigl(V_i\cap\sum_{j\in T\setminus\{i\}}V_j\Bigr).
\end{equation}
\end{definition}

Thus $A_k$ is the average overlap between one distinguished subspace and the sum of $k-1$ other subspaces, taken over all such choices. In particular, $A_2$ is the average pairwise overlap, while $A_k$ for $k>2$ captures overlap with sums of several subspaces. The quantities $A_k$ are indexed by the number of subspaces involved, but they do not isolate interactions involving exactly $k$ subspaces; the same overlap may contribute to more than one $A_k$.

\begin{corollary}[Exact defect decomposition] \label{cor:defect-decomposition}
For every finite family $V_1,\ldots,V_n\subseteq V$,
\begin{equation}\label{defect-dim-decomposition}
\defop(V_1,\ldots,V_n)=\sum_{k=2}^{n}A_k(V_1,\ldots,V_n).
\end{equation}
In particular, each contribution is nonnegative.
\end{corollary}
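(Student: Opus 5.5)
The plan is to regroup the double sum in Theorem~\ref{thm:symmetric-Grassmann} according to the cardinality of the set of participating indices. Rearranging \eqref{sym-Grassmann} and using Definition~\ref{def:defect}, I first write
\begin{equation*}
\defop(V_1,\ldots,V_n)=\sum_{i\in[n]}\sum_{\emptyset\neq S\subseteq[n]\setminus\{i\}}c_{n,|S|+1}\,\dim\Bigl(V_i\cap\sum_{j\in S}V_j\Bigr).
\end{equation*}

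Next I reindex the correction terms. Each pair $(i,S)$ with $i\in[n]$ and $\emptyset\neq S\subseteq[n]\setminus\{i\}$ corresponds bijectively to a pair $(T,i)$ with $T\coloneq S\cup\{i\}$, $|T|\geq 2$ and $i\in T$. The inverse map is $S=T\setminus\{i\}$. Under this correspondence the coefficient $c_{n,|S|+1}$ becomes $c_{n,|T|}$.

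I then split the sum according to $k=|T|$, which ranges over $2\le k\le n$. The coefficient $c_{n,k}$ depends only on $k$, so it can be pulled out of the inner sum over $T$ and $i\in T$. What remains for each $k$ is exactly the definition \eqref{kth-defect-Ak} of $A_k$, and summing over $k$ yields \eqref{defect-dim-decomposition}.

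Nonnegativity of each contribution is immediate. Every $A_k$ is a positive multiple of a sum of dimensions, since $c_{n,k}>0$. The same holds for each individual correction term $c_{n,|S|+1}\dim(V_i\cap\sum_{j\in S}V_j)$.

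There is no real obstacle here. The only point requiring care is confirming that the reindexing $(i,S)\mapsto(S\cup\{i\},i)$ is bijective. In particular, each intersection term $V_i\cap\sum_{j\in T\setminus\{i\}}V_j$ should appear exactly once, with distinguished index $i$, and nothing is double counted as in the pairwise collection of the $n=3$ example.
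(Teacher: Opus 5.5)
Your proposal is correct and is essentially the paper's own proof: rearrange Theorem~\ref{thm:symmetric-Grassmann} into the defect, reindex by $T=S\cup\{i\}$, and group the terms by $k=|T|$ to recover the definition \eqref{kth-defect-Ak} of $A_k$. Your nonnegativity argument, that $c_{n,k}>0$ multiplies a sum of dimensions, is likewise exactly what the ``in particular'' clause requires.
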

\begin{proof}
Reindexing Theorem~\ref{thm:symmetric-Grassmann} by $T=S\cup\{i\}$, writing $k=|T|$, and using \eqref{defect-dim} and \eqref{kth-defect-Ak} gives \eqref{defect-dim-decomposition}.
\end{proof}

The decomposition \eqref{defect-dim-decomposition} retains information that the total defect alone does not. For example, in Example~\ref{ex:three-lines} all pairwise intersections vanish, so $A_2=0$, whereas $A_3=1$: the dependence is visible only when a subspace is compared with the sum of the other two. Thus the profile distinguishes pairwise overlap from dependence that appears only when more subspaces are considered.

\begin{remark}[Expected overlap]
Averaging the ordered identity \eqref{ordered-Grassmann} over uniformly random permutations gives
\begin{equation}
\defop(V_1,\ldots,V_n)=\mathtt{E}\Bigl[\sum_{i\in[n]}
\dim\Bigl(V_i\cap\sum_{j\in\Pred_{\pi}(i)}V_j\Bigr)\Bigr].
\end{equation}
Thus the dimension defect is the expected total overlap encountered when the subspaces are inserted in a uniformly random order.
\end{remark}

The graded defects satisfy an additional monotonicity property. At the rank-function level this is an averaged diminishing-returns statement for submodular functions \cite{Fujishige05}; we give the direct subspace proof.

\begin{theorem}[Monotonicity of the graded defects] \label{thm:Ak-monotone}
For every family $V_1,\ldots,V_n\subseteq V$,
\begin{equation}\label{kth-defect-Ak-monotone}
A_2\leq A_3\le\cdots\leq A_n.
\end{equation}
\end{theorem}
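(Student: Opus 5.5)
We will prove $A_k\le A_{k+1}$ for each $2\le k\le n-1$ by rewriting both sides as averages over the same probability space, so that the comparison becomes a termwise inequality between nested subspaces. The starting observation is that $c_{n,k}\binom{n}{k}k=1$. Hence $A_k$ is exactly the expected value of $\dim\bigl(V_i\cap\sum_{j\in T\setminus\{i\}}V_j\bigr)$ when $T$ is a uniformly random $k$-subset and $i$ is a uniformly random element of $T$. Equivalently, $A_k=\mathtt{E}\bigl[\dim(V_i\cap\sum_{j\in S}V_j)\bigr]$, where $i$ is uniform in $[n]$ and $S$ is a uniform $(k-1)$-subset of $[n]\setminus\{i\}$. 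We fix this normalized expression first, checking from \eqref{kth-defect-Ak} that the weights really give the uniform distribution on such pairs $(i,S)$.

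Next we couple the two levels. We choose $i$ uniformly and then a uniformly random ordering $\sigma$ of $[n]\setminus\{i\}$. Let $S_k$ be the first $k-1$ elements and $S_{k+1}$ the first $k$ elements of $\sigma$. Then $S_k\subseteq S_{k+1}$, and marginally $(i,S_k)$ and $(i,S_{k+1})$ have exactly the distributions representing $A_k$ and $A_{k+1}$. Since $\sum_{j\in S_k}V_j\subseteq\sum_{j\in S_{k+1}}V_j$, we get
\[
\dim\Bigl(V_i\cap\sum_{j\in S_k}V_j\Bigr)\le\dim\Bigl(V_i\cap\sum_{j\in S_{k+1}}V_j\Bigr)
\]
pointwise, and taking expectations gives $A_k\le A_{k+1}$.

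A purely combinatorial version avoids probability. Each $(k-1)$-subset $S\subseteq[n]\setminus\{i\}$ extends to $n-k$ sets $S\cup\{l\}$, and each $k$-subset arises from $k$ such $S$. Double counting therefore gives
\[
\sum_{|S|=k-1}\dim(V_i\cap U_S)\le\frac{1}{n-k}\sum_{|S|=k-1}\sum_{l\notin S\cup\{i\}}\dim(V_i\cap U_{S\cup\{l\}})=\frac{k}{n-k}\sum_{|S'|=k}\dim(V_i\cap U_{S'}).
\]
Dividing by $\binom{n-1}{k-1}$ and using $\binom{n-1}{k}=\binom{n-1}{k-1}\frac{n-k}{k}$ turns this into the averaged comparison.

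The only delicate point is the normalization bookkeeping: we must verify that $c_{n,k}$ exactly converts the sum over pairs $(T,i)$ into a uniform average, so that the two levels are compared with matching total weights. Beyond that, the argument is pure monotonicity of $U\mapsto\dim(V_i\cap U)$.
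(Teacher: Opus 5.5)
Your proof is correct and follows the paper's argument. The paper also fixes $i$, writes $A_k=\frac{1}{n}\sum_{i\in[n]}M_{i,k-1}$, where $M_{i,s}$ is the average of $\dim(V_i\cap\sum_{j\in S}V_j)$ over $s$-subsets $S\subseteq[n]\setminus\{i\}$, and proves $M_{i,s}\le M_{i,s+1}$ by the same double count of nested pairs $S\subset S'$ that you use; your random-ordering coupling is a probabilistic restatement of that step.
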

\begin{proof}
Fix $i\in[n]$ and define $f_i(S)\coloneq\dim(V_i\cap\sum_{j\in S}V_j)$ for $S\subseteq[n]\setminus\{i\}$. The function $f_i$ is monotone under inclusion. Let $m=n-1$, and for $0\leq s\leq m$ let
\begin{equation*}
M_{i,s}\coloneq\frac{1}{\binom{m}{s}}
\sum_{\substack{S\subseteq[n]\setminus\{i\}\\|S|=s}}f_i(S).
\end{equation*}
We claim that $M_{i,s}\leq M_{i,s+1}$ for $0\leq s\leq m-1$. Double count pairs $(S,T)$ with $S\subset T$, $|S|=s$, and $|T|=s+1$. Since $f_i(S)\leq f_i(T)$, then $(m-s)\sum_{|S|=s}f_i(S)\leq(s+1)\sum_{|T|=s+1}f_i(T)$. Using $(m-s)\binom{m}{s}=(s+1)\binom{m}{s+1}$ gives $M_{i,s}\leq M_{i,s+1}$. Finally, since $k\binom{n}{k}=n\binom{n-1}{k-1}$, we may rewrite \eqref{kth-defect-Ak} as $A_k=\frac{1}{n}\sum_{i\in[n]}M_{i,k-1}$. For $2\leq k\leq n-1$, averaging the inequalities $M_{i,k-1}\leq M_{i,k}$ over $i$ proves \eqref{kth-defect-Ak-monotone}.
\end{proof}

The monotonicity has a direct averaged interpretation. For a fixed $i$, the quantity $M_{i,s}$ is the average dimension of the part of $V_i$ already contained in the span of a collection of $s$ other subspaces. Adding subspaces to that collection can only increase the contained part, and averaging over $i$ gives $A_2\leq\cdots\leq A_n$. Equivalently, as the span available for comparison grows, the average overlap with a distinguished subspace cannot decrease.

\begin{corollary}[Defect bounds]\label{cor:defect-bounds}
Let $p\coloneq\frac{1}{\binom{n}{2}}\sum_{1\leq i<j\leq n}\dim(V_i\cap V_j)$ be the average pairwise intersection dimension, and let $q\coloneq\frac{1}{n}\sum_{i\in[n]}\dim(V_i\cap\sum_{j\neq i}V_j)$ be the average dimension of the intersection of each subspace with the sum of all the others. Then
\begin{equation}\label{defect-dim-bounds}
(n-1)p\leq\defop(V_1,\ldots,V_n)\leq(n-1)q.
\end{equation}
\end{corollary}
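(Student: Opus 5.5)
The plan is to combine Corollary~\ref{cor:defect-decomposition} with Theorem~\ref{thm:Ak-monotone}. The only real work is identifying $p$ with $A_2$ and $q$ with $A_n$.

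\emph{Step 1: $A_2=p$.} For $k=2$, each unordered pair $T=\{i,j\}$ contributes to \eqref{kth-defect-Ak} the two terms $\dim(V_i\cap V_j)$ and $\dim(V_j\cap V_i)$. Hence
\[
A_2=2c_{n,2}\sum_{i<j}\dim(V_i\cap V_j).
\]
Since $c_{n,2}=1/(2\binom{n}{2})$, as already observed after \eqref{sum-n=3}, this gives $A_2=p$.

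\emph{Step 2: $A_n=q$.} For $k=n$, the only subset is $T=[n]$, and $c_{n,n}=1/(n\binom{n}{n})=1/n$. So $A_n=\frac1n\sum_i\dim(V_i\cap\sum_{j\neq i}V_j)=q$.

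\emph{Step 3: sandwich.} By Corollary~\ref{cor:defect-decomposition}, $\defop=\sum_{k=2}^{n}A_k$, a sum of $n-1$ terms. By Theorem~\ref{thm:Ak-monotone}, each term satisfies $A_2\le A_k\le A_n$. Summing these inequalities over $k=2,\dots,n$ gives
\[
(n-1)A_2\le\defop\le(n-1)A_n.
\]
Substituting Steps 1 and 2 yields \eqref{defect-dim-bounds}.

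No step is a genuine obstacle. The only point needing care is the factor of $2$ in Step 1, since ordered and unordered pairs must be reconciled. For $n=2$ both bounds reduce to $\dim(V_1\cap V_2)$, so the statement is consistent with Remark~\ref{rem:n2}.
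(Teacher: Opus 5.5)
Your proposal is correct and follows essentially the same route as the paper: identify $A_2=p$ and $A_n=q$ directly from the definition \eqref{kth-defect-Ak}, then sandwich $\defop=\sum_{k=2}^{n}A_k$ using Theorem~\ref{thm:Ak-monotone}. Your explicit check of the factor $2c_{n,2}=1/\binom{n}{2}$ fills in a step the paper leaves as ``by definition.''
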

\begin{proof}
By definition, $A_2=p$ and $A_n=q$. Hence Theorem~\ref{thm:Ak-monotone} and Corollary~\ref{cor:defect-decomposition} imply
\begin{equation*}
(n-1)A_2\le\sum_{k=2}^{n}A_k\le(n-1)A_n.
\end{equation*}
\end{proof}

The next statement includes the standard direct-sum criterion \cite[Lemma~2.1]{MS25} and expresses it in terms of the defect profile.

\begin{corollary}[Direct-sum criterion]\label{cor:direct-sum}
The following are equivalent:
\begin{enumerate}[label=\textnormal{(\roman*)}]
\item $\dim(\sum_{i\in[n]}V_i)=\sum_{i\in[n]}\dim(V_i)$;
\item $A_k=0$ for every $2\leq k\leq n$;
\item $A_n=0$;
\item $V_i\cap\sum_{j\neq i}V_j=\{0\}$ for every $i\in[n]$;
\item $\sum_{i\in[n]}V_i=\bigoplus_{i\in[n]}V_i$.
\end{enumerate}
\end{corollary}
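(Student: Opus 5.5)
I will prove the chain (i)$\Rightarrow$(ii)$\Rightarrow$(iii)$\Rightarrow$(iv)$\Rightarrow$(v)$\Rightarrow$(i), using the results already established for the defect profile.

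For (i)$\Rightarrow$(ii), condition (i) says exactly that $\defop(V_1,\ldots,V_n)=0$. By Corollary~\ref{cor:defect-decomposition}, $\defop=\sum_{k=2}^n A_k$. Each $A_k$ is a nonnegative combination of dimensions, so every $A_k$ must vanish. The implication (ii)$\Rightarrow$(iii) is trivial.

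For (iii)$\Rightarrow$(iv), I unwind the definition. Since $c_{n,n}=1/n$ and the only $T$ with $|T|=n$ is $[n]$ itself,
\[
A_n=\frac1n\sum_{i\in[n]}\dim\Bigl(V_i\cap\sum_{j\neq i}V_j\Bigr).
\]
This is a sum of nonnegative integers, so $A_n=0$ forces every term to be zero, which is (iv).

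For (iv)$\Rightarrow$(v), I use the standard characterization of an internal direct sum. It suffices to show that a relation $\sum_i v_i=0$ with $v_i\in V_i$ forces every $v_i=0$. Given such a relation, write $v_i=-\sum_{j\neq i}v_j\in V_i\cap\sum_{j\neq i}V_j=\{0\}$.

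For (v)$\Rightarrow$(i), I use that the dimension of an internal direct sum is additive: concatenating bases of the $V_i$ gives a basis of the sum. Alternatively, (v) gives (iv) via the same uniqueness argument, and then the ordered Grassmann expansion (Lemma~\ref{lem:ordered}) has all correction terms zero, since $V_{\pi(k)}\cap\sum_{l<k}V_{\pi(l)}\subseteq V_{\pi(k)}\cap\sum_{j\neq\pi(k)}V_j$.

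No step is a real obstacle. The only point needing care is (iii)$\Rightarrow$(iv), where the normalization $c_{n,n}=1/n$ and nonnegativity must be used to pass from a vanishing average to vanishing of each term. One could also route (iii)$\Rightarrow$(ii) through Theorem~\ref{thm:Ak-monotone}, since $0\le A_k\le A_n$, but the cycle above does not need it.
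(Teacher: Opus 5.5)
Your argument is correct and uses the same ingredients as the paper: the decomposition $\defop=\sum_{k=2}^n A_k$ with nonnegativity of each $A_k$, the explicit form $A_n=\frac1n\sum_{i}\dim\bigl(V_i\cap\sum_{j\neq i}V_j\bigr)$, and the standard direct-sum criterion. The only difference is in how (iii) leads back to the other conditions. The paper proves pairwise equivalences and gets (iii)$\Rightarrow$(ii) from the monotonicity in Theorem~\ref{thm:Ak-monotone}. You instead close a cycle through (v)$\Rightarrow$(i) using additivity of dimension over internal direct sums, so your version does not need monotonicity at all.
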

\begin{proof}
The equivalence of (i) and (ii) follows from \eqref{defect-dim-decomposition} and nonnegativity. By monotonicity, (ii) and (iii) are equivalent. Since $A_n$ is the average of the nonnegative numbers
$\dim(V_i\cap\sum_{j\neq i}V_j)$, conditions (iii) and (iv) are equivalent. Finally, (iv) is the standard characterization of an internal direct sum, giving (iv)$\Leftrightarrow$(v).
\end{proof}

The symmetric formula can be useful even when only coarse intersection data are available. As already used in Corollary~\ref{cor:direct-sum}, the condition $V_i\cap\sum_{j\ne i}V_j=\{0\}$ forces every smaller loss associated with $i$ to vanish, by inclusion. Pairwise disjointness alone gives only $A_2=0$; Example~\ref{ex:three-lines} shows that $A_3$ can still be positive. Thus the remaining terms of the defect decomposition detect dependence missed by pairwise intersections.

\begin{remark}\label{rem:n4}
For $n=4$, the coefficients in Theorem~\ref{thm:symmetric-Grassmann} are $\frac{1}{12}$ for $|S|=1$, $\frac{1}{12}$ for $|S|=2$, and $\frac{1}{4}$ for $|S|=3$. Thus, $\defop(V_1,V_2,V_3,V_4)=A_2+A_3+A_4$, where $A_2\leq A_3\leq A_4$. The equality separates pairwise overlap, three-subspace overlap with a distinguished member, and one-versus-all-others overlap without introducing alternating inclusion--exclusion terms.
\end{remark}

%%%%%%%%%%%%%%%%%%%%%%%%%%%%%%%%%%%%%%%%%%%%%%%%%
\subsection{Refinements of the defect hierarchy}
\label{subsec:refinements}
%%%%%%%%%%%%%%%%%%%%%%%%%%%%%%%%%%%%%%%%%%%%%%%%%
The preceding results resolve the failure of dimension additivity into the graded quantities $A_2,\ldots,A_n$. This refinement is different in nature from the dimension formulas of Tian \cite{Tian19} and Makrides--Szemberg \cite{MS25}, which determine the total dimension correction through global sum and intersection data. From the polymatroid viewpoint, monotonicity of marginal losses is closely related to the diminishing-returns property of submodular rank functions; see, for example, \cite{Fujishige05}. Moreover, linearly representable polymatroids satisfy additional rank inequalities that need not hold for arbitrary polymatroids, as illustrated by the inequalities for subspace arrangements obtained in \cite{Kinser11}. In the present setting, the symmetric averages defining $A_k$ yield further exact relations. We derive below exact remainder terms for the defect bounds, characterize their equality cases, relate the hierarchy to the average rank profile of the subspace family, and obtain a dual hierarchy for intersections.

For $2\leq k\leq n-1$, define the successive defect gaps by
\begin{equation}\label{defect-gap-Bk}
B_k\coloneq A_{k+1}-A_k.
\end{equation}
By Theorem~\ref{thm:Ak-monotone}, one has $B_k\ge0$. Thus $B_k$ measures the increase in the average overlap when the interaction order is raised from $k$ to $k+1$.

\begin{proposition}\label{prop:Bk-local}
For $2\leq k\leq n-1$,
\begin{equation}\label{defect-gap-Bk-local}
B_k=\frac{c_{n,k}}{n-k}
\sum_{i\in[n]}\sum_{\substack{S\subseteq[n]\setminus\{i\}\\|S|=k-1}}
\sum_{j\in[n]\setminus(S\cup\{i\})}\Bigl[\dim\Bigl(V_i\cap\Bigl(V_j+\sum_{l\in S}V_l\Bigr)\Bigr)-\dim\Bigl(V_i\cap\sum_{l\in S}V_l\Bigr)\Bigr].
\end{equation}
In particular, $B_k$ is the average additional overlap created by adjoining one further subspace to a $(k-1)$-element predecessor family.
\end{proposition}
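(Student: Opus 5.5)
Using the averages $M_{i,s}$ from the proof of Theorem~\ref{thm:Ak-monotone}, the plan is to write $A_{k+1}$ and $A_k$ as averages over the same index set, namely triples $(i,S,j)$ with $|S|=k-1$, $i\notin S$, and $j\notin S\cup\{i\}$. We already have $A_k=\frac1n\sum_{i}M_{i,k-1}$, where $M_{i,s}=\binom{n-1}{s}^{-1}\sum_{|S|=s}f_i(S)$ and $f_i(S)=\dim(V_i\cap\sum_{l\in S}V_l)$. The difference $B_k=\frac1n\sum_i (M_{i,k}-M_{i,k-1})$ should then become a single average of the increments $f_i(S\cup\{j\})-f_i(S)$, which is exactly the bracket in \eqref{defect-gap-Bk-local}.

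The key step is the double count already used in the monotonicity proof, now kept as an equality. Put $m=n-1$ and $s=k-1$. Each $T\subseteq[n]\setminus\{i\}$ with $|T|=s+1$ arises from exactly $s+1$ pairs $(S,j)$ with $T=S\cup\{j\}$ and $|S|=s$. Hence
\[
\sum_{|S|=s}\sum_{j\notin S\cup\{i\}} f_i(S\cup\{j\})=(s+1)\sum_{|T|=s+1}f_i(T).
\]
Each $S$ also has exactly $m-s=n-k$ choices of $j$, so
\[
\sum_{|S|=s}\sum_{j}f_i(S)=(n-k)\sum_{|S|=s}f_i(S).
\]
Dividing both identities by $(n-k)\binom{m}{s}$ and using $(m-s)\binom{m}{s}=(s+1)\binom{m}{s+1}$, the first becomes $M_{i,s+1}$ and the second becomes $M_{i,s}$. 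Therefore
\[
M_{i,k}-M_{i,k-1}=\frac{1}{(n-k)\binom{n-1}{k-1}}\sum_{|S|=k-1}\sum_{j\notin S\cup\{i\}}\bigl[f_i(S\cup\{j\})-f_i(S)\bigr].
\]

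Finally, average over $i$ with the factor $\frac1n$ and use $n\binom{n-1}{k-1}=k\binom nk$, so that $\frac{1}{n\binom{n-1}{k-1}}=c_{n,k}$. This gives the prefactor $\frac{c_{n,k}}{n-k}$ and hence \eqref{defect-gap-Bk-local}. The closing interpretation follows by counting the index set: it has $n\binom{n-1}{k-1}(n-k)$ triples $(i,S,j)$, which is exactly the reciprocal of the prefactor. So $B_k$ is the uniform average of the additional overlap created by adjoining one further subspace $V_j$.

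There is no real obstacle here. The only point needing care is the bookkeeping of the multiplicity $s+1$ versus $n-k$ together with the binomial identity. For the boundary case $k=n-1$, the set $S\cup\{j\}$ has size $n-1$ and is forced, and the formula still applies because $M_{i,n-1}=f_i([n]\setminus\{i\})$.
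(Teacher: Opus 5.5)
Your proposal is correct and follows essentially the same route as the paper: for each fixed $i$ you double count the pairs $(S,j)$ with multiplicities $k$ and $n-k$, rewrite the bracket sum as $(n-k)\binom{n-1}{k-1}(M_{i,k}-M_{i,k-1})$, and then average over $i$ using $c_{n,k}=1/(n\binom{n-1}{k-1})$ together with $A_\ell=\frac1n\sum_i M_{i,\ell-1}$. Your added count of the triples $(i,S,j)$ also correctly justifies the "uniform average" interpretation; the separate remark about the boundary case $k=n-1$ is harmless but not needed.
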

\begin{proof}
Fix $i\in[n]$. In the sum in \eqref{defect-gap-Bk-local}, each $k$-element set $T\subseteq[n]\setminus\{i\}$ occurs exactly $k$ times as $T=S\cup\{j\}$, once for each choice of $j\in T$. Each $(k-1)$-element set $S\subseteq[n]\setminus\{i\}$ occurs exactly $n-k$ times in the subtracted term, once for each admissible choice of $j$. Hence, using the definitions of $f_i$ and $M_{i,s}$ from the proof of Theorem~\ref{thm:Ak-monotone}, the sum over $S$ and $j$ for this fixed $i$ is
\begin{equation*}
k\binom{n-1}{k}M_{i,k}
-(n-k)\binom{n-1}{k-1}M_{i,k-1}=(n-k)\binom{n-1}{k-1}
(M_{i,k}-M_{i,k-1}),
\end{equation*}
where $k\binom{n-1}{k}=(n-k)\binom{n-1}{k-1}$. Multiplying by $c_{n,k}/(n-k)$ and summing over $i$ gives
\begin{equation*}
\frac{1}{n}\sum_{i\in[n]}(M_{i,k}-M_{i,k-1})
=A_{k+1}-A_k=B_k,
\end{equation*}
because $c_{n,k}=1/(n\binom{n-1}{k-1})$ and $A_\ell=\frac{1}{n}\sum_{i\in[n]}M_{i,\ell-1}$. This proves \eqref{defect-gap-Bk-local}.
\end{proof}

Thus $B_k$ is the average increase in the overlap of the distinguished subspace $V_i$ with the span of the other subspaces when one further subspace $V_j$ is adjoined. The initial configuration consists of $V_i$ and the $k-1$ members of $S$, so its interaction order is $k$; adjoining $V_j$ raises it to $k+1$.

For $S\subseteq[n]$, set $U_S\coloneq\sum_{i\in S}V_i$, and for $j\in[n]\setminus S$ define
\begin{equation}\label{quotient-subspaces}
\overline{V}_j^{\,S}\coloneq\frac{V_j+U_S}{U_S}\subseteq\frac{V}{U_S}.
\end{equation}

\begin{corollary}[Quotient intersections and vanishing curvature]\label{cor:Bk-quotient}
For every $2\leq k\leq n-1$,
\begin{equation}\label{Bk-quotient}
B_k=\frac{1}{\binom{n}{k-1}\binom{n-k+1}{2}}
\sum_{\substack{S\subseteq[n]\\|S|=k-1}}
\sum_{\substack{j<l\\j,l\in[n]\setminus S}}
\dim(\overline{V}_j^{\,S}\cap\overline{V}_l^{\,S}).
\end{equation}
Consequently, $B_k=0$ if and only if
\begin{equation}\label{Bk-zero-quotient}
(V_j+U_S)\cap(V_l+U_S)=U_S
\end{equation}
for every $S\subseteq[n]$ with $|S|=k-1$ and every pair of
distinct indices $j,l\in[n]\setminus S$.
\end{corollary}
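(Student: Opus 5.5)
The plan is to start from Proposition~\ref{prop:Bk-local} and rewrite each bracketed increment as a quotient intersection, then regroup the sum by the set $S\cup\{i\}$. Fix $i$, a set $S\subseteq[n]\setminus\{i\}$ with $|S|=k-1$, and $j\notin S\cup\{i\}$. Put $S'\coloneq S\cup\{i\}$, so $|S'|=k$. By the Grassmann formula applied to $V_i$ with $U_S$ and with $V_j+U_S$,
\[
\dim\bigl(V_i\cap(V_j+U_S)\bigr)-\dim(V_i\cap U_S)=\dim(V_i+V_j+U_S)-\dim(V_j+U_S)-\dim(V_i+U_S)+\dim(U_S),
\]
that is, $\mathfrak r(S\cup\{i\})+\mathfrak r(S\cup\{j\})-\mathfrak r(S\cup\{i,j\})-\mathfrak r(S)$. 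Applying the Grassmann formula in $V/U_S$ shows this equals $\dim(\overline V_i^{\,S}\cap\overline V_j^{\,S})$.

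This is symmetric in $i$ and $j$. It is therefore an unordered-pair quantity attached to the set $S$ of size $k-1$ and the pair $\{i,j\}\subseteq[n]\setminus S$. This step avoids $S'$ altogether.

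Next comes the regrouping. The triple sum in \eqref{defect-gap-Bk-local} runs over ordered pairs $(i,j)$ of distinct elements outside $S$, with $S$ ranging over the $(k-1)$-subsets. Reorganizing by $S$ first, each unordered pair $\{j,l\}$ disjoint from $S$ appears twice. The triple sum therefore equals $2\sum_{|S|=k-1}\sum_{j<l}\dim(\overline V_j^{\,S}\cap\overline V_l^{\,S})$.

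It remains to check the constant, namely
\[
\frac{2c_{n,k}}{n-k}=\frac{2}{(n-k)\,k\binom nk}=\frac{1}{\binom{n}{k-1}\binom{n-k+1}{2}}.
\]
Since $k\binom nk=(n-k+1)\binom n{k-1}$, the left side is $2/\bigl((n-k)(n-k+1)\binom n{k-1}\bigr)$, which matches. This bookkeeping, including the double counting, is the only place an error could creep in, so I will verify it carefully. It is routine.

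For the vanishing criterion, every summand in \eqref{Bk-quotient} is a nonnegative integer dimension and the prefactor is positive. Hence $B_k=0$ if and only if each $\overline V_j^{\,S}\cap\overline V_l^{\,S}=0$. By the correspondence theorem, subspaces of $V/U_S$ correspond to subspaces of $V$ containing $U_S$. Under this correspondence the intersection of the images is $\bigl((V_j+U_S)\cap(V_l+U_S)\bigr)/U_S$, and it vanishes exactly when \eqref{Bk-zero-quotient} holds.
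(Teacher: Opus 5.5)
Your proposal is correct and follows the paper's proof: rewrite each bracket of Proposition~\ref{prop:Bk-local} as $\dim(\overline{V}_i^{\,S}\cap\overline{V}_j^{\,S})$, count each unordered pair twice, check the constant, and conclude by nonnegativity and the correspondence theorem. The only difference is minor: the paper gets the key identity from rank--nullity for the quotient map restricted to $V_i\cap(V_j+U_S)$, whose kernel is $V_i\cap U_S$, while you pass through two Grassmann formulas. One slip to fix: the right-hand side of your first display has every sign reversed and should read $\dim(V_i+U_S)+\dim(V_j+U_S)-\dim(V_i+V_j+U_S)-\dim(U_S)$; your restatement in terms of $\mathfrak{r}$ is correct, so nothing downstream is affected.
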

\begin{proof}
Let $Q_S:V\to V/U_S$ be the quotient map. Its restriction to $V_j\cap(V_l+U_S)$ has image $\overline{V}_j^{\,S}\cap\overline{V}_l^{\,S}$ and kernel $V_j\cap U_S$. Hence
\[
\dim(\overline{V}_j^{\,S}\cap\overline{V}_l^{\,S})
=\dim\bigl(V_j\cap(V_l+U_S)\bigr)-\dim(V_j\cap U_S).
\]
Substitute this identity into Proposition~\ref{prop:Bk-local}. Each unordered pair occurs twice in its ordered-pair sum, and
\[
n\binom{n-1}{k-1}(n-k)=2\binom{n}{k-1}\binom{n-k+1}{2}.
\]
This gives \eqref{Bk-quotient}. Since every summand is nonnegative, $B_k=0$ precisely when all quotient intersections vanish. Taking their inverse images under $Q_S$ gives \eqref{Bk-zero-quotient}.
\end{proof}

Following we show that the successive gaps provide exact refinements of both inequalities in Corollary~\ref{cor:defect-bounds}.

\begin{theorem}[Exact gap formulas]\label{thm:exact-gap}
For every finite family of subspaces $V_1,\ldots,V_n\subseteq V$ with $n\geq2$,
\begin{equation}\label{lower-gap-exact}
\defop(V_1,\ldots,V_n)=(n-1)A_2+\sum_{k=2}^{n-1}(n-k)B_k,
\end{equation}
and
\begin{equation}\label{upper-gap-exact}
\defop(V_1,\ldots,V_n)=(n-1)A_n-\sum_{k=2}^{n-1}(k-1)B_k.
\end{equation}
Thus the errors in the two endpoint estimates of Corollary~\ref{cor:defect-bounds} are themselves resolved into nonnegative contributions indexed by interaction order.
\end{theorem}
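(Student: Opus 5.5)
This is a purely algebraic consequence of Corollary~\ref{cor:defect-decomposition} and the definition \eqref{defect-gap-Bk}; no further geometry is needed. The plan is to write each $A_k$ as a telescoping sum of gaps anchored at one of the two endpoints, and then exchange the order of summation.

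For \eqref{lower-gap-exact}, I would start from the telescoping identity
\[
A_k=A_2+\sum_{m=2}^{k-1}B_m \qquad (2\leq k\leq n).
\]
This follows directly from $B_m=A_{m+1}-A_m$, with the empty sum when $k=2$. Substituting it into $\defop=\sum_{k=2}^nA_k$ gives $(n-1)A_2$ plus a double sum over pairs $2\leq m<k\leq n$. For a fixed $m$, the index $k$ ranges over $m+1,\ldots,n$, which contributes exactly $n-m$ copies of $B_m$. This yields \eqref{lower-gap-exact}.

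For \eqref{upper-gap-exact}, I would anchor at the top instead:
\[
A_k=A_n-\sum_{m=k}^{n-1}B_m .
\]
Summing over $k=2,\ldots,n$, a fixed $m$ is subtracted once for each $k$ with $2\leq k\leq m$, that is, $m-1$ times. This gives $(n-1)A_n-\sum_{m=2}^{n-1}(m-1)B_m$.

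The only point requiring care is the index bookkeeping. I would check it against the degenerate case $n=2$, where both sums are empty and the identities reduce to $\defop=A_2$. A second check is that subtracting the two formulas gives $(n-1)(A_n-A_2)=\sum_{m}(n-1)B_m$, which is consistent with the telescoping identity. Finally, the concluding remark about nonnegative contributions follows from $B_k\geq0$ (Theorem~\ref{thm:Ak-monotone}) and the positivity of the coefficients $n-k$ and $k-1$ in the range $2\leq k\leq n-1$.
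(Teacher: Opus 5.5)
Your proposal is correct and matches the paper's proof: you substitute $A_k=A_2+\sum_{m=2}^{k-1}B_m$ and $A_k=A_n-\sum_{m=k}^{n-1}B_m$ into $\defop=\sum_{k=2}^{n}A_k$, then count that $B_m$ occurs $n-m$ and $m-1$ times respectively. Your index bookkeeping and the nonnegativity remark via Theorem~\ref{thm:Ak-monotone} are both right.
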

\begin{proof}
Substitute $A_k=A_2+\sum_{j=2}^{k-1}B_j$ into $\defop=\sum_{k=2}^n A_k$. The gap $B_j$ occurs $n-j$ times, giving \eqref{lower-gap-exact}. Similarly, substituting $A_k=A_n-\sum_{j=k}^{n-1}B_j$ counts $B_j$ exactly $j-1$ times and gives \eqref{upper-gap-exact}.
\end{proof}

The identities \eqref{lower-gap-exact}-\eqref{upper-gap-exact} give the endpoint bounds exact, scale-by-scale remainders. In particular, $\defop(V_1,\ldots,V_n)-(n-1)A_2$ measures precisely what the pairwise lower estimate misses, while $(n-1)A_n-\defop(V_1,\ldots,V_n)$ measures the gap to the one-versus-all upper estimate. Each $B_k$ identifies the contribution to that gap associated with a particular increase in interaction order.

The exact remainder terms lead to a rigidity statement for equality in either defect bound.

\begin{theorem}[Common-core rigidity]\label{thm:common-core}
Let $V_1,\ldots,V_n\subseteq V$ and set $D\coloneq\bigcap_{i\in[n]}V_i$. The following conditions are equivalent:
\begin{enumerate}[label=\textnormal{(\roman*)}]
\item $\defop(V_1,\ldots,V_n)=(n-1)A_2$;

\item $\defop(V_1,\ldots,V_n)=(n-1)A_n$;

\item $A_2=A_3=\cdots=A_n$;

\item $V_i\cap\sum_{j\neq i}V_j=D$ for every $i\in[n]$;

\item the quotient subspaces $V_1/D,\ldots,V_n/D\subseteq V/D$ form an internal direct sum;

\item for every $i\in[n]$ there exists a complement $U_i$ of $D$ in $V_i$ such that $D\oplus U_1\oplus\cdots\oplus U_n$ is a direct sum.
\end{enumerate}
Whenever these conditions hold,
\begin{equation}\label{kth-defect-Ak-common-core}
A_2=\cdots=A_n=\dim(D)
\end{equation}
and
\begin{equation}\label{def-common-core}
\defop(V_1,\ldots,V_n)=(n-1)\dim(D).
\end{equation}
\end{theorem}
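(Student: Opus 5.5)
The plan is to arrange the six conditions in a cycle, using the exact gap formulas of Theorem~\ref{thm:exact-gap} for the numerical conditions and elementary linear algebra for the geometric ones.

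\emph{Numerical conditions.} Since every $B_k\geq0$ and the coefficients $n-k$ and $k-1$ are positive for $2\leq k\leq n-1$, both \eqref{lower-gap-exact} and \eqref{upper-gap-exact} show that (i) and (ii) are each equivalent to $B_2=\cdots=B_{n-1}=0$. By the definition \eqref{defect-gap-Bk}, this is exactly (iii). For $n=2$ all three conditions hold trivially, so we may assume $n\geq3$.

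\emph{From (iii) to (iv).} Condition (iii) gives $A_2=A_n$. The key observation is that $D\subseteq V_i\cap V_j$ for every pair. Hence $A_2\geq\dim(D)$, and every summand $\dim(V_i\cap\sum_{j\neq i}V_j)$ in $A_n$ is at least $\dim(V_i\cap V_j)$. Comparing only the two endpoints does not immediately give termwise equality, so I would use the vanishing curvature criterion of Corollary~\ref{cor:Bk-quotient} instead. With $k=2$ and $S=\{i\}$, condition $B_2=0$ gives $(V_j+V_i)\cap(V_l+V_i)=V_i$ for all distinct $i,j,l$. Iterating with the higher $B_k$ gives $(V_j+U_S)\cap(V_l+U_S)=U_S$ for all $S$ with $|S|\leq n-2$.

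I would then prove by induction on $|S|$ that $V_i\cap U_S$ is independent of $S$ for nonempty $S\subseteq[n]\setminus\{i\}$. Equivalently, $M_{i,s}$ is constant in $s\geq1$ for each $i$; this follows from $M_{i,k}=M_{i,k-1}$, which Proposition~\ref{prop:Bk-local} gives termwise because each bracket there is nonnegative. Therefore $V_i\cap U_S=V_i\cap V_j$ for every $j\in S$, so $V_i\cap V_j=V_i\cap V_l$ for all $j,l\neq i$. By symmetry, all pairwise intersections coincide. Each of them then equals $\bigcap_m V_m=D$, since $V_i\cap V_j\subseteq V_l$ for every $l$. This yields $V_i\cap\sum_{j\neq i}V_j=V_i\cap V_j=D$, which is (iv). I expect this step, extracting termwise equalities from the averaged vanishing, to be the main obstacle; the nonnegativity of each bracket in \eqref{defect-gap-Bk-local} is what makes it work.

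\emph{Geometric equivalences.} For (iv)$\Leftrightarrow$(v), apply Corollary~\ref{cor:direct-sum} in $V/D$. The preimage of $(V_i/D)\cap\sum_{j\neq i}(V_j/D)$ is $V_i\cap\sum_{j\neq i}V_j$, because $D\subseteq V_i$ and sums commute with the quotient. For (v)$\Leftrightarrow$(vi), choose complements $U_i$ of $D$ in $V_i$. These map isomorphically onto $V_i/D$, so the quotients form an internal direct sum exactly when $D+\sum U_i$ is direct. Conversely, (vi) gives (v) directly.

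\emph{Closing the cycle and final formulas.} To return to (iii), assume (iv). Then $D\subseteq V_i\cap V_j\subseteq V_i\cap\sum_{j\neq i}V_j=D$, so $V_i\cap U_S=D$ for all nonempty $S$. Hence every summand in every $A_k$ equals $\dim(D)$. Since the coefficients in each $A_k$ sum to one (as in Theorem~\ref{thm:dual-Grassmann}), this gives $A_k=\dim(D)$, which proves \eqref{kth-defect-Ak-common-core}. Corollary~\ref{cor:defect-decomposition} then gives \eqref{def-common-core}.
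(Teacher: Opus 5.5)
Your proof is correct. The numerical equivalences, (iv)$\Leftrightarrow$(v)$\Leftrightarrow$(vi), and the closing step (iv)$\Rightarrow$(iii) with the final formulas all match the paper's argument. The difference lies in (iii)$\Rightarrow$(iv).

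The paper uses only $A_2=A_n$. Per-$i$ monotonicity forces $M_{i,1}=M_{i,n-1}$. Each $\dim(V_i\cap V_j)$ is at most $\dim(V_i\cap W_i)=M_{i,n-1}$, while their average over $j\neq i$ is $M_{i,1}$. Hence every one of them is equal, and containment gives $V_i\cap V_j=V_i\cap W_i$ immediately. Your remark that comparing the endpoints does not give termwise equality is therefore mistaken. Indeed, $A_n-A_2=\frac{1}{n(n-1)}\sum_{i\neq j}\bigl[\dim(V_i\cap W_i)-\dim(V_i\cap V_j)\bigr]$ is itself an average of nonnegative terms.

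Your route instead uses the vanishing of every $B_k$ through the nonnegative brackets of Proposition~\ref{prop:Bk-local}. This gives $V_i\cap(V_j+U_S)=V_i\cap U_S$ for all covering pairs with $S\neq\emptyset$. You then descend the Boolean lattice down to singletons. This is valid, and it yields directly the stronger fact that $V_i\cap U_S$ is constant over all nonempty $S\subseteq[n]\setminus\{i\}$; the paper only obtains this after establishing (iv). The cost is noticeably more machinery than the endpoint argument needs.

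Two small points. First, your detour through Corollary~\ref{cor:Bk-quotient} is never used afterwards. Second, as stated it is slightly off: the vanishing of $B_2,\ldots,B_{n-1}$ gives $(V_j+U_S)\cap(V_l+U_S)=U_S$ only for $1\le|S|\le n-2$, not for all $|S|\le n-2$. For $S=\emptyset$ it would assert $V_j\cap V_l=\{0\}$, which fails whenever $D\neq\{0\}$.
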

\begin{proof}
By Theorem~\ref{thm:exact-gap}, equality in either endpoint bound holds if and only if $B_k=0$ for every $2\leq k\leq n-1$. Hence conditions (i), (ii), and (iii) are equivalent. Assume (iii). Using $M_{i,s}$ and its monotonicity from the proof of Theorem~\ref{thm:Ak-monotone}, $A_n-A_2=0$ forces $M_{i,n-1}=M_{i,1}$ for every $i$. For fixed $i$ and every $j\neq i$, each $\dim(V_i\cap V_j)$ is at most $\dim(V_i\cap\sum_{k\ne i}V_k)=M_{i,n-1}$, while their average is $M_{i,1}=M_{i,n-1}$. Hence every one of these dimensions equals $M_{i,n-1}$, and containment gives $V_i\cap V_j=V_i\cap\sum_{k\ne i}V_k$ for all $j\ne i$. Since $V_i\cap V_j=V_j\cap V_i$, these spaces have a common value for all $i$, and that value is $D=\bigcap_{i\in[n]}V_i$. This proves (iv). Condition (iv) is equivalent to $(V_i/D)\cap\sum_{j\neq i}(V_j/D)=\{0\}$ for every $i$, which is the standard direct-sum criterion for the quotient family. Thus (iv) and (v) are equivalent. Choosing complements $U_i\subseteq V_i$ such that $V_i=D\oplus U_i$ shows that (v) and (vi) are equivalent. Finally, assume (iv). For every nonempty $S\subseteq[n]\setminus\{i\}$, we have $D\subseteq V_i\cap\sum_{j\in S}V_j\subseteq V_i\cap\sum_{j\neq i}V_j=D$. Thus every intersection dimension defining $A_k$ equals $\dim(D)$, so $A_2=\cdots=A_n=\dim(D)$. This proves (iii), completes the equivalence, and establishes \eqref{kth-defect-Ak-common-core}. Equation~\eqref{def-common-core} follows from Corollary~\ref{cor:defect-decomposition}.
\end{proof}

The Theorem~\ref{thm:common-core} says that equality in either endpoint bound occurs exactly
when the defect profile is flat: no new average overlap appears as the
interaction order increases. Geometrically, all overlap is then accounted
for by the common subspace $D$. After quotienting by $D$, the remaining
subspaces form an internal direct sum, so there is no further dependence
among them.

\begin{remark}
For families of subspaces of equal dimension, condition~\textnormal{(iv)} is the defining property of a sunflower of maximal dimension in \cite[Section~2]{BSSV17}. For such families with constant pairwise intersection dimension, a characterization by successive rank increments is given in \cite[Remark~1.3]{LST22}. Theorem~\ref{thm:common-core} characterizes the common-core decomposition for arbitrary finite subspace families through equality in either averaged defect bound.
\end{remark}

The hierarchy also admits a useful description purely in terms of average ranks of collections of subspaces.

\begin{definition}[Average rank profile]
\label{def:average-rank}
For $0\leq k\leq n$, define
\begin{equation}
R_k(V_1,\ldots,V_n)\coloneq\frac{1}{\binom{n}{k}}
\sum_{\substack{S\subseteq[n]\\|S|=k}}
\dim\Bigl(\sum_{i\in S}V_i\Bigr),
\end{equation}
where $R_0=0$.
\end{definition}

Thus $R_k$ is the average dimension of the span of a uniformly chosen collection of $k$ subspaces, or equivalently the average of $\mathfrak{r}(S)$ over all $k$-element subsets $S\subseteq[n]$. Its increment $\Delta_k\coloneq R_k-R_{k-1}$ is the average marginal increase in span dimension as the number of selected subspaces grows from $k-1$ to $k$. Submodularity implies that these average increments decrease with $k$, so the profile $(R_k)_{k=0}^n$ is discretely concave \cite{Fujishige05}. Moreover, $(R_k)_{k=0}^n$ is the cardinality profile of the full permutation average of $\mathfrak{r}$, placing this construction in the general framework of permutation-symmetric polymatroids and entropy functions \cite{ChenYeung16}. The following theorem makes the diminishing-returns interpretation precise by identifying $A_k=R_1-\Delta_k$ and $B_k=\Delta_k-\Delta_{k+1}$. Thus $A_k$ measures the loss in average marginal dimension relative to the first subspace, while $B_k$ measures the decrease in that marginal dimension from one level to the next.

\begin{theorem}[Average-rank representation]
\label{thm:average-rank}
For every $2\leq k\leq n$,
\begin{equation}\label{kth-defect-Ak-Rk}
A_k=R_1+R_{k-1}-R_k.
\end{equation}
Equivalently, if
\begin{equation}
\Delta_k\coloneq R_k-R_{k-1}, \qquad 1\leq k\leq n,
\end{equation}
then
\begin{equation}\label{kth-defect-Ak-Delta}
A_k=R_1-\Delta_k.
\end{equation}
Moreover, for $2\leq k\leq n-1$,
\begin{equation}\label{defect-gap-Bk-Rk}
B_k=2R_k-R_{k-1}-R_{k+1}=\Delta_k-\Delta_{k+1}.
\end{equation}
Consequently,
\begin{equation}\label{Rk-concave}
R_{k+1}-R_k\leq R_k-R_{k-1}, \qquad 1\leq k\leq n-1,
\end{equation}
so the average rank profile $(R_k)_{k=0}^{n}$ is discretely concave.
\end{theorem}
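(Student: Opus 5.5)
The plan is to prove \eqref{kth-defect-Ak-Rk} by rewriting each summand of $A_k$ with the two-subspace Grassmann formula and then averaging. Fix a $k$-element set $T\subseteq[n]$ and $i\in T$. By \eqref{rank-marginal} applied with $S=T\setminus\{i\}$,
\begin{equation*}
\dim\Bigl(V_i\cap\sum_{j\in T\setminus\{i\}}V_j\Bigr)=\mathfrak{r}(\{i\})+\mathfrak{r}(T\setminus\{i\})-\mathfrak{r}(T).
\end{equation*}
Summing over $i\in T$ and then over all $T$ with $|T|=k$ splits $\sum_T\sum_{i\in T}(\cdots)$ into three double sums, and each is evaluated by counting how often a given set appears.

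For the first sum, each singleton $\{i\}$ occurs once for every $k$-set containing $i$, so it contributes $\binom{n-1}{k-1}\sum_i\mathfrak{r}(\{i\})=\binom{n-1}{k-1}\,n\,R_1$. For the second, each $(k-1)$-set $S$ arises as $T\setminus\{i\}$ for exactly $n-k+1$ choices of $T\supset S$, so it contributes $(n-k+1)\binom{n}{k-1}R_{k-1}$. The third sum is simply $k\binom{n}{k}R_k$. Multiplying by $c_{n,k}=1/(k\binom{n}{k})$ and using the identities $k\binom{n}{k}=n\binom{n-1}{k-1}=(n-k+1)\binom{n}{k-1}$ normalizes all three coefficients to $1$. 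This gives $A_k=R_1+R_{k-1}-R_k$. The only step that needs care is this coefficient bookkeeping, where one must check the two binomial identities; everything else is formal.

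The equivalent form \eqref{kth-defect-Ak-Delta} is a restatement, since $R_k-R_{k-1}=\Delta_k$. For $2\le k\le n-1$, the definition $B_k=A_{k+1}-A_k$ then gives
\begin{equation*}
B_k=(R_1-\Delta_{k+1})-(R_1-\Delta_k)=\Delta_k-\Delta_{k+1}=2R_k-R_{k-1}-R_{k+1}.
\end{equation*}

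Concavity \eqref{Rk-concave} for $2\le k\le n-1$ follows from $B_k\ge 0$, which is Theorem~\ref{thm:Ak-monotone}. The case $k=1$ says $\Delta_2\le\Delta_1=R_1$. This holds because $\Delta_2=R_1-A_2$ and $A_2\ge 0$, as $A_2$ is an average of intersection dimensions. Together these cover all $1\le k\le n-1$.
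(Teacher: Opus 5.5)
Your proof is correct and follows essentially the same route as the paper. Both arguments rewrite each summand of $A_k$ with the two-subspace Grassmann formula, average the three rank terms to $R_1$, $R_{k-1}$ and $R_k$, take differences to get $B_k$, and obtain concavity from $B_k\ge 0$ together with $\Delta_1-\Delta_2=A_2\ge 0$ for the first step; your binomial bookkeeping simply spells out the averaging step that the paper states in one line.
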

\begin{proof}
For $T\subseteq[n]$ with $|T|=k$ and $i\in T$,
the classical Grassmann formula gives
\begin{equation}\label{pointed-rank}
\dim\Bigl(V_i\cap\sum_{j\in T\setminus\{i\}}V_j\Bigr)=
\dim(V_i)+\dim\Bigl(\sum_{j\in T\setminus\{i\}}V_j\Bigr)
-\dim\Bigl(\sum_{j\in T}V_j\Bigr).
\end{equation}
Average \eqref{pointed-rank} over all pairs $(T,i)$ with $|T|=k$ and $i\in T$. The first term averages to $R_1$, the second to $R_{k-1}$, and the third to $R_k$. This proves \eqref{kth-defect-Ak-Rk}. Equation~\eqref{kth-defect-Ak-Delta} follows immediately. Taking the difference of \eqref{kth-defect-Ak-Rk} at levels $k+1$ and $k$ gives $B_k=A_{k+1}-A_k=2R_k-R_{k-1}-R_{k+1}=\Delta_k-\Delta_{k+1}$. Since $B_k\ge0$, one has $\Delta_2\geq\cdots\geq\Delta_n$. The remaining first step follows from $\Delta_1-\Delta_2=A_2\geq0$. This proves \eqref{Rk-concave}.
\end{proof}

\begin{remark}\label{rem:rank-curvature}
Equation~\eqref{defect-gap-Bk-Rk} shows that $B_k$ is the discrete curvature of the average-rank profile: $B_k=\Delta_k-\Delta_{k+1}$. It measures how much the average marginal dimension drops as the number of selected subspaces increases from $k$ to $k+1$. Hence $B_k=0$ means that the average marginal dimension is unchanged across those two steps, while $B_k>0$ records a drop at that scale. Thus the nonnegativity and monotonicity of the defect hierarchy, $0\leq A_2\leq\cdots\leq A_n$, are equivalent to the discrete concavity of $(R_k)_{k=0}^{n}$: $A_k$ measures average overlap at interaction order $k$, whereas $B_k$ measures the loss of average marginal rank when one passes from $k$ to $k+1$ participating subspaces. Notice also that
\begin{equation}
\defop(V_1,\ldots,V_n)=nR_1-R_n.
\end{equation}
Under the equivalent conditions of Theorem~\ref{thm:common-core}, with common core $D$, one has
\begin{equation}
R_k=kR_1-(k-1)\dim(D), \qquad 1\leq k\leq n.
\end{equation}
\end{remark}

\begin{example}\label{ex:same-defect-different-profiles}
Let $e_1,e_2,e_3$ be the standard basis of $\mathbbm{F}^3$ and consider the two arrangements
\begin{align*}
U&\coloneq\bigl(\operatorname{span}\{e_1\},
\operatorname{span}\{e_1+e_2\},
\operatorname{span}\{e_2\},
\operatorname{span}\{e_3\}\bigr), \\
W&\coloneq\bigl(\operatorname{span}\{e_1\},
\operatorname{span}\{e_2\},
\operatorname{span}\{e_3\},
\operatorname{span}\{e_1+e_2+e_3\}\bigr).
\end{align*}
Both have total dimension defect $1$. Their average-rank profiles are
\[
(R_0,R_1,R_2,R_3,R_4)(U)=\Bigl(0,1,2,\frac{11}{4},3\Bigr),
\qquad
(R_0,R_1,R_2,R_3,R_4)(W)=(0,1,2,3,3).
\]
Theorem~\ref{thm:average-rank} therefore gives
\[
(A_2,A_3,A_4)(U)=\Bigl(0,\frac{1}{4},\frac{3}{4}\Bigr),
\qquad
(A_2,A_3,A_4)(W)=(0,0,1).
\]
The successive gaps $B_k=A_{k+1}-A_k$ are
\[
(B_2,B_3)(U)=\Bigl(\frac{1}{4},\frac{1}{2}\Bigr),
\qquad
(B_2,B_3)(W)=(0,1).
\]
Thus $U$ has positive curvature at both levels, whereas the curvature of $W$ is concentrated at the final level. Figure~\ref{fig:rank-defect-profiles} compares the average-rank and dimension-defect profiles of these two arrangements. The defect profile distinguishes dependence visible at intermediate levels from dependence visible only at the final level, even when the individual dimensions and total defect agree.
\begin{figure}[t]
\centering
\includegraphics[width=\textwidth]{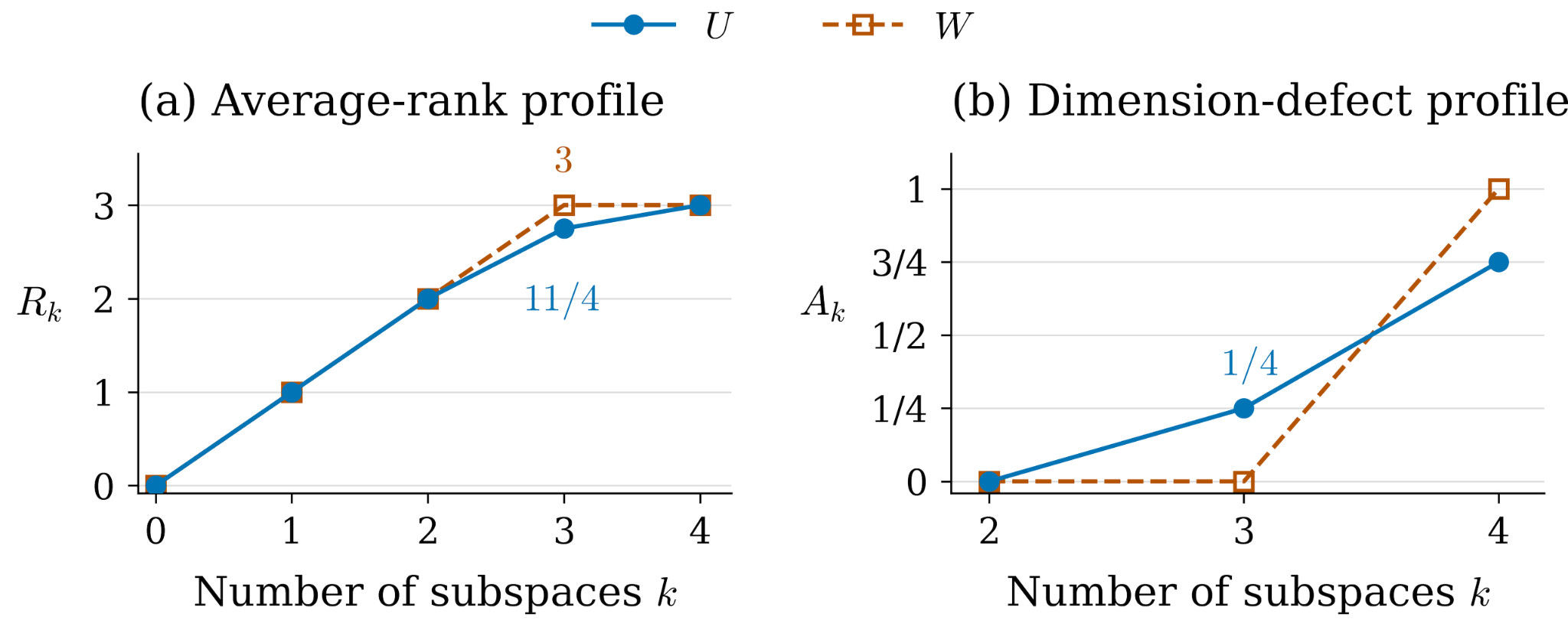}
\caption{Average-rank profiles (a) and dimension-defect profiles (b) of the arrangements $U$ and $W$ in Example~\ref{ex:same-defect-different-profiles}. Blue solid lines with circles represent $U$; orange dashed lines with squares represent $W$. Both arrangements have total dimension defect $1$, but their profiles distinguish dependence at intermediate levels from dependence appearing only at the final level.} \label{fig:rank-defect-profiles}
\end{figure}
\end{example}

We finally record the dual hierarchy. This strengthens the dual dimension
formula of Theorem~\ref{thm:dual-Grassmann}: the entire graded decomposition and its
monotonicity admit dual counterparts.

Let $d=\dim(V)$ and write $\operatorname{codim}(U)\coloneq d-\dim(U)$ for every subspace $U\subseteq V$.

\begin{definition}[Dual dimension defect]
\label{def:dual-defect}
Define
\begin{equation}
\defop^{\vee}(V_1,\ldots,V_n)\coloneq
\sum_{i\in[n]}\operatorname{codim}(V_i)-
\operatorname{codim}\Bigl(\bigcap_{i\in[n]}V_i\Bigr).
\end{equation}
For $2\leq k\leq n$, define
\begin{equation}
A_k^{\vee}\coloneq c_{n,k}
\sum_{\substack{T\subseteq[n]\\|T|=k}}
\sum_{i\in T}\operatorname{codim}\Bigl(V_i+\bigcap_{j\in T\setminus\{i\}}V_j\Bigr).
\end{equation}
\end{definition}

\begin{theorem}[Dual defect hierarchy]
\label{thm:dual-Grassmann-hierarchy}
For every finite family $V_1,\ldots,V_n\subseteq V$,
\begin{equation}
\defop^{\vee}(V_1,\ldots,V_n)=\sum_{k=2}^{n}A_k^{\vee},
\end{equation}
and
\begin{equation}
A_2^{\vee}\leq A_3^{\vee}\leq\cdots\leq A_n^{\vee}.
\end{equation}
\end{theorem}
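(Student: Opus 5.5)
The plan is to transport the whole primal hierarchy to the dual space $V^\vee$ via annihilators, exactly as in the proof of Theorem~\ref{thm:dual-Grassmann}. We apply Corollary~\ref{cor:defect-decomposition} and Theorem~\ref{thm:Ak-monotone} to the family $V_1^{\circ},\ldots,V_n^{\circ}\subseteq V^\vee$. The identifications needed are then checked term by term, so no new inequality has to be proved.

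First, the total defect. Since $\dim(V_i^{\circ})=\operatorname{codim}(V_i)$ and, by \eqref{annihilator},
\[
\dim\Bigl(\sum_{i\in[n]}V_i^{\circ}\Bigr)=\dim\Bigl(\bigl(\bigcap_{i\in[n]}V_i\bigr)^{\circ}\Bigr)=\operatorname{codim}\Bigl(\bigcap_{i\in[n]}V_i\Bigr),
\]
we obtain $\defop(V_1^{\circ},\ldots,V_n^{\circ})=\defop^{\vee}(V_1,\ldots,V_n)$.

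Second, the graded terms. For $T\subseteq[n]$ with $|T|=k$ and $i\in T$, apply \eqref{annihilator} twice:
\[
V_i^{\circ}\cap\sum_{j\in T\setminus\{i\}}V_j^{\circ}
=V_i^{\circ}\cap\Bigl(\bigcap_{j\in T\setminus\{i\}}V_j\Bigr)^{\circ}
=\Bigl(V_i+\bigcap_{j\in T\setminus\{i\}}V_j\Bigr)^{\circ}.
\]
Its dimension is therefore $\operatorname{codim}(V_i+\bigcap_{j\in T\setminus\{i\}}V_j)$. Comparing with \eqref{kth-defect-Ak} gives $A_k(V_1^{\circ},\ldots,V_n^{\circ})=A_k^{\vee}(V_1,\ldots,V_n)$ for every $2\le k\le n$. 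The coefficients $c_{n,k}$ match verbatim, so no ambient-dimension bookkeeping is required here. This is a simplification compared with Theorem~\ref{thm:dual-Grassmann}, because the statement is phrased in codimensions.

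With these identifications, Corollary~\ref{cor:defect-decomposition} applied in $V^\vee$ gives the decomposition $\defop^{\vee}=\sum_{k=2}^n A_k^{\vee}$. Theorem~\ref{thm:Ak-monotone} applied in $V^\vee$ gives the chain $A_2^{\vee}\le\cdots\le A_n^{\vee}$. Both results hold over an arbitrary field and for any finite-dimensional ambient space, so they apply to $V^\vee$.

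The only step needing care is the intersection identity above. It uses the second identity in \eqref{annihilator} for the set $T\setminus\{i\}$, which requires $T\setminus\{i\}\neq\emptyset$. This holds since $k\ge2$. It also uses the first identity for the two-element sum $V_i+\bigcap_{j}V_j$, which is valid because annihilation reverses the lattice operations in finite dimension.

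As a cross-check, monotonicity can also be seen directly. The function $S\mapsto\operatorname{codim}(V_i+\bigcap_{j\in S}V_j)$ is monotone nondecreasing in $S$, since a larger $S$ gives a smaller intersection. The double-counting argument of Theorem~\ref{thm:Ak-monotone} then applies verbatim to it.
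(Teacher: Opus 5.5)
Your proposal is correct and follows the paper's proof: apply Corollary~\ref{cor:defect-decomposition} and Theorem~\ref{thm:Ak-monotone} to $V_1^{\circ},\ldots,V_n^{\circ}$, and use the annihilator identities to identify their total and graded defects with $\defop^{\vee}$ and $A_k^{\vee}$. Your extra checks are sound but not needed for the argument: that $T\setminus\{i\}\neq\emptyset$, and the direct monotonicity of $S\mapsto\operatorname{codim}\bigl(V_i+\bigcap_{j\in S}V_j\bigr)$.
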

\begin{proof}
Apply Corollary~\ref{cor:defect-decomposition} and
Theorem~\ref{thm:Ak-monotone} to $V_1^\circ,\ldots,V_n^\circ$.
The annihilator identities \eqref{annihilator} identify their total defect
with $\defop^\vee$ and their $k$-th graded defect with $A_k^\vee$, since
\begin{equation*}
\Bigl(V_i+\bigcap_{j\in T\setminus\{i\}}V_j\Bigr)^\circ
=V_i^\circ\cap\sum_{j\in T\setminus\{i\}}V_j^\circ.
\end{equation*}
Both assertions follow.
\end{proof}

\begin{corollary}[Dual defect bounds]\label{cor:dual-bounds}
Set $p^{\vee}\coloneq\frac{1}{\binom{n}{2}}\sum_{1\leq i<j\leq n}\operatorname{codim}(V_i+V_j)$ and $q^{\vee}\coloneq\frac{1}{n}\sum_{i\in[n]}\operatorname{codim}(V_i+\bigcap_{j\neq i}V_j)$. Then
\begin{equation}\label{dual-bounds}
(n-1)p^{\vee}\leq\defop^{\vee}(V_1,\ldots,V_n)\leq(n-1)q^{\vee}.
\end{equation}
\end{corollary}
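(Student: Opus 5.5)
The plan is to transfer Corollary~\ref{cor:defect-bounds} to the annihilator family, in the same way that Theorem~\ref{thm:dual-Grassmann-hierarchy} was obtained. I will apply Corollary~\ref{cor:defect-bounds} in $V^\vee$ to the subspaces $V_1^\circ,\ldots,V_n^\circ$. This gives
\[
(n-1)p'\leq\defop(V_1^\circ,\ldots,V_n^\circ)\leq(n-1)q',
\]
where $p'$ is the average of $\dim(V_i^\circ\cap V_j^\circ)$ over pairs $i<j$, and $q'$ is the average over $i$ of $\dim\bigl(V_i^\circ\cap\sum_{j\neq i}V_j^\circ\bigr)$.

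Next I translate each term using the annihilator identities \eqref{annihilator} together with $\dim(U^\circ)=d-\dim(U)=\operatorname{codim}(U)$.

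First, $V_i^\circ\cap V_j^\circ=(V_i+V_j)^\circ$. Hence $\dim(V_i^\circ\cap V_j^\circ)=\operatorname{codim}(V_i+V_j)$, and so $p'=p^\vee$.

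Second, $\sum_{j\neq i}V_j^\circ=\bigl(\bigcap_{j\neq i}V_j\bigr)^\circ$. Intersecting with $V_i^\circ$ and using the first identity of \eqref{annihilator} gives $\bigl(V_i+\bigcap_{j\neq i}V_j\bigr)^\circ$. Its dimension is $\operatorname{codim}\bigl(V_i+\bigcap_{j\neq i}V_j\bigr)$, so $q'=q^\vee$.

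Third, $\dim(V_i^\circ)=\operatorname{codim}(V_i)$ and $\dim\bigl(\sum_i V_i^\circ\bigr)=\operatorname{codim}\bigl(\bigcap_i V_i\bigr)$. Therefore $\defop(V_1^\circ,\ldots,V_n^\circ)=\defop^\vee(V_1,\ldots,V_n)$.

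Substituting these three identifications yields \eqref{dual-bounds}.

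As an alternative route, I can combine Theorem~\ref{thm:dual-Grassmann-hierarchy} directly with the observation that $A_2^\vee=p^\vee$ and $A_n^\vee=q^\vee$. Here $c_{n,2}=1/(2\binom n2)$, and each unordered pair appears twice, once with each distinguished index; also $c_{n,n}=1/n$. The bound then follows from $(n-1)A_2^\vee\le\sum_k A_k^\vee\le(n-1)A_n^\vee$.

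I expect no real obstacle. The only points requiring care are the nested annihilator identity for $V_i+\bigcap_{j\neq i}V_j$, which uses both identities in \eqref{annihilator}, and the fact that finite-dimensionality makes $\dim U^\circ=\operatorname{codim}U$ valid.
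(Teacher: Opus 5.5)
Your proof is correct and essentially the paper's. The paper uses exactly your alternative route: it identifies $A_2^{\vee}=p^{\vee}$ and $A_n^{\vee}=q^{\vee}$ and then invokes Theorem~\ref{thm:dual-Grassmann-hierarchy}. Your main route performs the same annihilator transfer one step later, on Corollary~\ref{cor:defect-bounds} rather than on the hierarchy, so it amounts to the same computation.
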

\begin{proof}
By definition, $A_2^{\vee}=p^{\vee}$ and $A_n^{\vee}=q^{\vee}$. The bound \eqref{dual-bounds} follows immediately from Theorem~\ref{thm:dual-Grassmann-hierarchy}.
\end{proof}
\begin{corollary}[Exact dual gap formulas]
If $B_k^{\vee}\coloneq A_{k+1}^{\vee}-A_k^{\vee}$ for $2\leq k\leq n-1$, then
\begin{align}\label{dual-lower-gap}
\defop^{\vee}(V_1,\ldots,V_n)&=(n-1)A_2^{\vee}+\sum_{k=2}^{n-1}(n-k)B_k^{\vee},
\\ \label{dual-upper-gap}
\defop^{\vee}(V_1,\ldots,V_n)&=(n-1)A_n^{\vee}-\sum_{k=2}^{n-1}(k-1)B_k^{\vee}.
\end{align}
\end{corollary}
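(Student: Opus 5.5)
The two identities are purely algebraic consequences of the dual decomposition $\defop^{\vee}=\sum_{k=2}^{n}A_k^{\vee}$ from Theorem~\ref{thm:dual-Grassmann-hierarchy}. The plan is to copy the telescoping argument of Theorem~\ref{thm:exact-gap}. Alternatively, one can apply Theorem~\ref{thm:exact-gap} directly to the annihilator family $V_1^{\circ},\ldots,V_n^{\circ}\subseteq V^{\vee}$. The proof of Theorem~\ref{thm:dual-Grassmann-hierarchy} already identifies the defect of that family with $\defop^{\vee}$ and its graded defects with $A_k^{\vee}$, so its gaps are exactly the $B_k^{\vee}$. I will present the direct telescoping, because it needs nothing beyond the dual decomposition.

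For \eqref{dual-lower-gap}, I first write each level relative to the bottom one, using the definition of the gaps:
\[
A_k^{\vee}=A_2^{\vee}+\sum_{j=2}^{k-1}B_j^{\vee}, \qquad 2\leq k\leq n.
\]
Summing over $k$ from $2$ to $n$ gives $(n-1)A_2^{\vee}$ plus a double sum. In that double sum, a fixed $B_j^{\vee}$ appears once for each $k$ with $j+1\leq k\leq n$, that is, $n-j$ times. Substituting into $\defop^{\vee}=\sum_{k=2}^{n}A_k^{\vee}$ then yields \eqref{dual-lower-gap}.

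For \eqref{dual-upper-gap}, I telescope downward from the top level instead:
\[
A_k^{\vee}=A_n^{\vee}-\sum_{j=k}^{n-1}B_j^{\vee}.
\]
Summing over $k$ from $2$ to $n$ gives $(n-1)A_n^{\vee}$ minus a double sum. Here $B_j^{\vee}$ is subtracted once for each $k$ with $2\leq k\leq j$, that is, $j-1$ times, which yields \eqref{dual-upper-gap}.

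None of these steps is a real obstacle. The only point needing care is the multiplicity count in the exchange of summation order. The case $n=2$ is consistent: both gap sums are empty and both identities reduce to $\defop^{\vee}=A_2^{\vee}$. I would also note that $B_k^{\vee}\geq0$, by the monotonicity in Theorem~\ref{thm:dual-Grassmann-hierarchy}. So, exactly as in the primal case, these identities refine Corollary~\ref{cor:dual-bounds} with nonnegative remainders.
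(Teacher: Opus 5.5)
Your proposal is correct and is essentially the paper's own proof. The paper simply invokes the telescoping argument of Theorem~\ref{thm:exact-gap} applied to the dual decomposition $\defop^{\vee}=\sum_{k=2}^{n}A_k^{\vee}$, and your multiplicity counts $n-j$ and $j-1$ are exactly what that argument requires.
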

\begin{proof}
The exact identities \eqref{dual-lower-gap} and \eqref{dual-upper-gap} follow from the same telescoping argument used in Theorem~\ref{thm:exact-gap}.
\end{proof}

\begin{corollary}[Equality in the dual defect bounds]
\label{cor:dual-equality}
Set $T\coloneq\sum_{i\in[n]}V_i$. Equality in either bound of Corollary~\ref{cor:dual-bounds} holds if and only if
\begin{equation}\label{dual-equality}
V_i+\bigcap_{j\neq i}V_j=T
\qquad\forall~i\in[n].
\end{equation}
In this case, $A_2^\vee=\cdots=A_n^\vee=\operatorname{codim}(T)$, and
\begin{equation}
\defop^\vee(V_1,\ldots,V_n)=(n-1)\operatorname{codim}(T).
\end{equation}
\end{corollary}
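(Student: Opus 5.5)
The plan is to transport Theorem~\ref{thm:common-core} through annihilators, as in the proof of Theorem~\ref{thm:dual-Grassmann-hierarchy}. Apply Theorem~\ref{thm:common-core} to the family $V_1^\circ,\ldots,V_n^\circ\subseteq V^\vee$. By \eqref{annihilator}, the total defect of this family is $\defop^\vee(V_1,\ldots,V_n)$, and its $k$-th graded defect is $A_k^\vee$. Hence $p^\vee$ and $q^\vee$ are its $A_2$ and $A_n$. Equality in either bound of Corollary~\ref{cor:dual-bounds} is therefore exactly condition (i) or (ii) of Theorem~\ref{thm:common-core} for the annihilator family. Both are equivalent to condition (iv):
\[
V_i^\circ\cap\sum_{j\neq i}V_j^\circ=\bigcap_{j\in[n]}V_j^\circ\qquad\forall\,i\in[n].
\]

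Next, translate this back. By \eqref{annihilator}, the left-hand side equals $V_i^\circ\cap\bigl(\bigcap_{j\neq i}V_j\bigr)^\circ=\bigl(V_i+\bigcap_{j\neq i}V_j\bigr)^\circ$. The right-hand side equals $\bigl(\sum_{j}V_j\bigr)^\circ=T^\circ$. Annihilation $U\mapsto U^\circ$ is an inclusion-reversing bijection on subspaces of the finite-dimensional space $V$, with $U^{\circ\circ}=U$ under the canonical identification. So the displayed condition is equivalent to $V_i+\bigcap_{j\neq i}V_j=T$ for every $i$, which is \eqref{dual-equality}. The only point needing care is this injectivity; it also follows from comparing dimensions, since $U^\circ=U'^\circ$ with $U\subseteq U'$ (here $V_i+\bigcap_{j\ne i}V_j\subseteq T$) forces $\dim U=\dim U'$.

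For the equality values, Theorem~\ref{thm:common-core} gives $A_k^\vee=\dim\bigl(\bigcap_j V_j^\circ\bigr)=\dim(T^\circ)=\operatorname{codim}(T)$ for all $k$, and the dual defect equals $(n-1)\operatorname{codim}(T)$ by \eqref{def-common-core}. One can also check this directly: under \eqref{dual-equality}, every $V_i+\bigcap_{j\in S}V_j$ lies between $V_i+\bigcap_{j\ne i}V_j=T$ and $T$, so each codimension term equals $\operatorname{codim}(T)$, and Theorem~\ref{thm:dual-Grassmann-hierarchy} gives the sum. No serious obstacle is expected. The main thing to verify carefully is that the annihilator family's $A_k$ matches $A_k^\vee$ term by term, which was already established in the proof of Theorem~\ref{thm:dual-Grassmann-hierarchy}.
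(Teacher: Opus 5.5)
Your proposal is correct and follows essentially the same route as the paper: you apply Theorem~\ref{thm:common-core} to the annihilator family $V_1^\circ,\ldots,V_n^\circ$, whose common core is $T^\circ$, translate condition (iv) back via \eqref{annihilator}, and read off the equality values from $\dim(T^\circ)=\operatorname{codim}(T)$. Your explicit check that $U\mapsto U^\circ$ is injective (using $V_i+\bigcap_{j\neq i}V_j\subseteq T$ and a dimension count), and your direct verification of the values, fill in steps the paper leaves implicit.
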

\begin{proof}
Apply Theorem~\ref{thm:common-core} to $V_1^\circ,\ldots,V_n^\circ$. Their common core is $T^\circ$, and the equality criterion is $V_i^\circ\cap\sum_{j\neq i}V_j^\circ=T^\circ$ for every $i\in[n]$. The annihilator identities \eqref{annihilator} translate this condition into \eqref{dual-equality}. The remaining assertions follow from $\dim(T^\circ)=\operatorname{codim}(T)$ and the common-core formulas.
\end{proof}

The original and dual hierarchies therefore provide complementary descriptions of dependence in a finite family of subspaces. The former measures the progressive redundancy encountered under sums, while the latter measures the corresponding codimension loss under intersections. In both cases the successive gaps yield exact remainder terms, refining the endpoint inequalities by resolving their errors according to interaction order.

%%%%%%%%%%%%%%%%%%%%%%%%%%%%%%%%%%%%%%%%%%%%%%%%%
\subsection{Representability-specific refinements and Kinser slacks}
\label{subsec:Kinser}
%%%%%%%%%%%%%%%%%%%%%%%%%%%%%%%%%%%%%%%%%%%%%%%%%
The nonnegativity of the gaps $B_k$ follows from submodularity and therefore holds for arbitrary polymatroid rank functions. This raises a further question: does the same curvature profile also record constraints that depend on linear representability? Representable polymatroids satisfy additional inequalities, beginning with Ingleton's inequality \cite{Ingleton71}. More generally, Kinser \cite{Kinser11} constructed, for every $n\geq4$, an irreducible inequality for rank functions of arrangements of $n$ subspaces; for $n=4$ it reduces to Ingleton's inequality. This subsection connects those representability-specific inequalities to the defect curvature $B_k$.

A related geometric viewpoint was developed by Friedman and Tootooni \cite{FT23}, who studied the failure of distributive behavior for subspaces through their notion of discoordination. In particular, for three subspaces $A,B,C$ the quantity
\begin{equation}
\dim\bigl((A+C)\cap(B+C)\bigr)-\dim\bigl((A\cap B)+C\bigr)
\end{equation}
measures precisely such a failure of distributivity and appears naturally in the proof of Ingleton-type inequalities. The purpose of this subsection is to connect these representability-specific inequalities with the graded defect hierarchy developed above. We first give an exact geometric decomposition of the full Kinser slack, and then show that permutation averages of Kinser slacks recover the successive defect gaps $B_k$.

For subspaces $X\subseteq Z$, write
\begin{equation}
[Z:X]\coloneq\dim(Z)-\dim(X).
\end{equation}

For $n\geq4$, define the \emph{Kinser slack} of the ordered family $(V_1,\ldots,V_n)$ by
\begin{align}\nonumber
\mathcal{K}_n(V_1,\ldots,V_n)\coloneq &~\dim(V_1+V_3)+\dim(V_1+V_n)+\dim(V_2+V_3) \\ \nonumber
&+\sum_{i=4}^{n}\Bigl[\dim(V_2+V_i)+\dim(V_{i-1}+V_i)\Bigr] \\ \nonumber
&-\dim(V_1+V_2)-\dim(V_1+V_3+V_n)-\dim(V_3) \\
&-\sum_{i=4}^{n}\Bigl[\dim(V_i)+\dim(V_2+V_{i-1}+V_i)\Bigr].
\label{Kinser-slack}
\end{align}
Kinser's inequality \cite{Kinser11} states that
\begin{equation}\label{Kinser-nonnegative}
\mathcal{K}_n(V_1,\ldots,V_n)\geq0.
\end{equation}
For $n=4$, \eqref{Kinser-nonnegative} is precisely Ingleton's inequality \cite{Ingleton71}.

The following result resolves the entire Kinser slack into explicit nonnegative quotient dimensions by retaining the remainders in the proof of \cite[Section~3.1]{Kinser11}.

\begin{theorem}[Geometric decomposition of the Kinser slack]
\label{thm:Kinser-slack-decomposition}
Let $V_1,\ldots,V_n\subseteq V$, with $n\ge4$, and set $K\coloneq\bigcap_{i=3}^{n}V_i$. Then
\begin{align}\nonumber
\mathcal{K}_n(V_1,\ldots,V_n)=&~[K+V_1+V_2:V_1+V_2]+[(V_1+V_3)\cap(V_1+V_n):V_1+K] \\
&+[(K\!+\!V_1)\cap(K\!+\!V_2):K]+
\sum_{i=4}^{n}\Bigl[(V_i\!+\!V_2)\cap(V_i\!+\!V_{i-1}):
V_i\!+\!\bigcap_{j=2}^{i-1}V_j\Bigr].
\label{Kinser-slack-decomposition}
\end{align}
In particular, every summand on the right-hand side is nonnegative.
\end{theorem}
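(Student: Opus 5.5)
The identity is linear in dimensions, so the plan is to expand each bracket on the right-hand side of \eqref{Kinser-slack-decomposition} into dimensions of sums and intersections. The classical Grassmann formula \eqref{Grassmann-classical} then lets me rewrite every intersection either as a combination of dimensions of sums or as a telescoping quantity. After that I collect terms and compare with \eqref{Kinser-slack}. Nonnegativity is handled separately by checking that each bracket $[Z:X]$ is well defined, that is, $X\subseteq Z$.

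\textbf{Step 1: the first three brackets.} Grassmann gives
\[
\dim\bigl((V_1+V_3)\cap(V_1+V_n)\bigr)=\dim(V_1+V_3)+\dim(V_1+V_n)-\dim(V_1+V_3+V_n)
\]
and
\[
\dim\bigl((K+V_1)\cap(K+V_2)\bigr)=\dim(K+V_1)+\dim(K+V_2)-\dim(K+V_1+V_2).
\]
When these are added to the first bracket, the terms $\dim(K+V_1+V_2)$ and $\dim(K+V_1)$ cancel. The first three brackets therefore sum to
\[
\dim(V_1+V_3)+\dim(V_1+V_n)-\dim(V_1+V_2)-\dim(V_1+V_3+V_n)+\dim(K+V_2)-\dim(K).
\]

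\textbf{Step 2: the brackets with $4\le i\le n$.} Write $I_m\coloneq\bigcap_{j=2}^{m}V_j$. Grassmann gives
\[
\dim\bigl((V_i+V_2)\cap(V_i+V_{i-1})\bigr)=\dim(V_2+V_i)+\dim(V_{i-1}+V_i)-\dim(V_2+V_{i-1}+V_i).
\]
For the subtracted space, $\dim(V_i+I_{i-1})=\dim(V_i)+\dim(I_{i-1})-\dim(I_i)$, since $V_i\cap I_{i-1}=I_i$. Summed over $i=4,\ldots,n$, the intersection terms telescope to $\dim(I_3)-\dim(I_n)=\dim(V_2\cap V_3)-\dim(V_2\cap K)$.

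\textbf{Step 3: matching with \eqref{Kinser-slack}.} Comparing the result with the definition of $\mathcal{K}_n$, everything matches except for one residual. It remains to check
\[
\dim(K+V_2)-\dim(K)-\dim(V_2\cap V_3)+\dim(V_2\cap K)=\dim(V_2+V_3)-\dim(V_3).
\]
Applying Grassmann to $K+V_2$, the left side becomes $\dim(V_2)-\dim(V_2\cap V_3)$. Applying Grassmann to $V_2+V_3$, this equals the right side. This bookkeeping step, in particular choosing the chain $I_m$ so that the remainders telescope to exactly this residual, is the main obstacle. The rest is routine cancellation.

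\textbf{Step 4: nonnegativity.} It suffices to verify the containments $X\subseteq Z$:
\begin{itemize}
\item $V_1+V_2\subseteq K+V_1+V_2$ is immediate.
\item $V_1+K\subseteq(V_1+V_3)\cap(V_1+V_n)$ holds because $K\subseteq V_3$ and $K\subseteq V_n$.
\item $K\subseteq(K+V_1)\cap(K+V_2)$ is immediate.
\item $V_i+I_{i-1}\subseteq(V_i+V_2)\cap(V_i+V_{i-1})$ holds because $I_{i-1}\subseteq V_2\cap V_{i-1}$.
\end{itemize}
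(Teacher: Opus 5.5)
Your proof is correct and is essentially the paper's argument organized in the opposite direction. The paper applies the same Grassmann expansions around Kinser's auxiliary quantities $\mathcal{L}_0=[K+V_1+V_2:K+V_1]$ and $\mathcal{R}_0=[K+V_2:K]$, and telescopes $\mathcal{R}_0=[V_2:V_2\cap K]$ along the same chain $\bigcap_{j=2}^{m}V_j$; you instead expand the right-hand side directly and absorb that step into your residual check, with your Step~4 containments being exactly those that give nonnegativity.
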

\begin{proof}
Following the auxiliary spaces and descending intersection chain in \cite[Section~3.1]{Kinser11}, set $\mathcal{L}_0\coloneq[K+V_1+V_2:K+V_1]$ and $\mathcal{R}_0\coloneq[K+V_2:K]$. By the Grassmann formula applied to $K+V_1$ and $K+V_2$,
\begin{equation}\label{Kinser-middle-slack}
\mathcal{R}_0-\mathcal{L}_0=[(K+V_1)\cap(K+V_2):K].
\end{equation}
Since $V_1+V_2\subseteq K+V_1+V_2$ and $K+V_1\subseteq(V_1+V_3)\cap(V_1+V_n)$, we obtain the exact identity
\begin{align}\nonumber
\mathcal{L}_0=&\dim(V_1+V_2)+\dim(V_1+V_3+V_n)-\dim(V_1+V_3)-\dim(V_1+V_n) \\
&+[K+V_1+V_2:V_1+V_2]+[(V_1+V_3)\cap(V_1+V_n):V_1+K].
\label{Kinser-left-exact}
\end{align}
Indeed,
\begin{equation*}
\dim\bigl((V_1+V_3)\cap(V_1+V_n)\bigr)=\dim(V_1+V_3)+\dim(V_1+V_n)-\dim(V_1+V_3+V_n).
\end{equation*}
On the other hand,
\begin{equation}\label{Kinser-chain}
\mathcal{R}_0=[V_2:V_2\cap K]=\sum_{i=3}^{n}
\Bigl[\bigcap_{j=2}^{i-1}V_j:\bigcap_{j=2}^{i}V_j\Bigr].
\end{equation}
For $i=3$, the corresponding term is
\begin{equation}\label{Kinser-i3}
[V_2:V_2\cap V_3]=\dim(V_2+V_3)-\dim(V_3).
\end{equation}
For $i\geq4$, put $X_i\coloneq\bigcap_{j=2}^{i-1}V_j$. Then $[X_i:X_i\cap V_i]=[V_i+X_i:V_i]$. Moreover, $V_i+X_i\subseteq(V_i+V_2)\cap(V_i+V_{i-1})$, and therefore
\begin{align}\nonumber
[X_i:X_i\cap V_i]=&\dim(V_2+V_i)+\dim(V_{i-1}+V_i)-\dim(V_2+V_{i-1}+V_i)
-\dim(V_i) \\
&-[(V_i+V_2)\cap(V_i+V_{i-1}):V_i+X_i].
\label{Kinser-chain-exact}
\end{align}
Substituting \eqref{Kinser-i3} and \eqref{Kinser-chain-exact} into \eqref{Kinser-chain}, and then using \eqref{Kinser-middle-slack} and \eqref{Kinser-left-exact}, gives \eqref{Kinser-slack-decomposition}.
\end{proof}

Thus the Kinser slack is not merely nonnegative; it is given exactly by the sum of the displayed quotient dimensions. Each term measures a particular geometric failure of one subspace to contain, or coincide with, the relevant intersection. Equality holds precisely when every one of these geometric defects vanishes, as stated in the following corollary.

\begin{corollary}[Equality in the Kinser inequality]
\label{cor:Kinser-equality}
Recall that $K=\bigcap_{i=3}^{n}V_i$. Then $\mathcal{K}_n(V_1,\ldots,V_n)=0$ if and only if all of the following conditions hold:
\begin{enumerate}
\item $K\subseteq V_1+V_2$,
\item $(V_1+V_3)\cap(V_1+V_n)=V_1+K$,
\item $(K+V_1)\cap(K+V_2)=K$,
\item $(V_i+V_2)\cap(V_i+V_{i-1})=V_i+\bigcap_{j=2}^{i-1}V_j$ for every $4\leq i\leq n$.
\end{enumerate}
\end{corollary}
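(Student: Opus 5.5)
The corollary follows directly from Theorem~\ref{thm:Kinser-slack-decomposition}. That theorem writes $\mathcal{K}_n(V_1,\ldots,V_n)$ as a finite sum of quantities of the form $[Z:X]=\dim(Z)-\dim(X)$ with $X\subseteq Z$. Each such term is a nonnegative integer. A finite sum of nonnegative numbers vanishes exactly when every summand vanishes. So $\mathcal{K}_n=0$ if and only if each of the $3+(n-3)$ displayed terms is zero, and it remains to translate the vanishing of each term into the listed subspace condition.

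For this translation I use the elementary fact that for finite-dimensional $X\subseteq Z$, $[Z:X]=0$ holds precisely when $X=Z$. I then check, term by term, the inclusion that makes each index meaningful.
\begin{enumerate}
\item $V_1+V_2\subseteq K+V_1+V_2$. So the first term vanishes iff $K+V_1+V_2=V_1+V_2$, i.e.\ iff $K\subseteq V_1+V_2$. This is condition~1.
\item $V_1+K\subseteq(V_1+V_3)\cap(V_1+V_n)$, because $K\subseteq V_3\cap V_n$. So the second term vanishes iff condition~2 holds.
\item $K\subseteq(K+V_1)\cap(K+V_2)$, which gives condition~3.
\item For $4\leq i\leq n$, we have $V_i+\bigcap_{j=2}^{i-1}V_j\subseteq(V_i+V_2)\cap(V_i+V_{i-1})$, since the intersection lies in both $V_2$ and $V_{i-1}$. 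So the $i$-th summand vanishes iff condition~4 holds for that $i$.
\end{enumerate}

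The only point needing care is that each of these inclusions genuinely holds, so that each bracket is a well-defined nonnegative codimension. These inclusions were already invoked in the proof of Theorem~\ref{thm:Kinser-slack-decomposition}, so there is no real obstacle. Combining the four equivalences with the vanishing-of-sum observation proves both directions of the corollary at once.
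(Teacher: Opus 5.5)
Your proposal is correct and follows the paper's proof: every bracket in Theorem~\ref{thm:Kinser-slack-decomposition} is nonnegative, so $\mathcal{K}_n=0$ forces each bracket to vanish, and $[Z:X]=0$ with $X\subseteq Z$ means $X=Z$, which gives the four listed conditions. You also check explicitly the four inclusions that make each bracket a well-defined codimension, which the paper leaves implicit.
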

\begin{proof}
Every term on the right-hand side of \eqref{Kinser-slack-decomposition} is nonnegative. Hence their sum vanishes if and only if each term vanishes. The four stated conditions are precisely the corresponding equality conditions.
\end{proof}

\begin{remark}\label{rem:Ingleton-slack}
For $n=4$, one has $K=V_3\cap V_4$, and Theorem~\ref{thm:Kinser-slack-decomposition} gives an exact decomposition of the Ingleton slack. In particular, the terms
\begin{equation*}
[(V_1+V_3)\cap(V_1+V_4):V_1+(V_3\cap V_4)]
\end{equation*}
and
\begin{equation*}
[(V_4+V_2)\cap(V_4+V_3):V_4+(V_2\cap V_3)]
\end{equation*}
are of the same three-subspace non-distributivity form that appears in the discoordination invariant of Friedman and Tootooni \cite{FT23}. Thus \eqref{Kinser-slack-decomposition} may be viewed as a higher-order resolution of the geometric defects occurring in Ingleton--Kinser inequalities.
\end{remark}

We next connect the Kinser hierarchy with the successive defect gaps $B_k=A_{k+1}-A_k$ introduced in \eqref{defect-gap-Bk}. Although the individual Kinser inequalities contain representability-specific information, their complete symmetrization has a particularly simple form.

For $\pi\in\mathfrak{S}_n$, write
\begin{equation}
\mathcal{K}_n^{\pi}\coloneq\mathcal{K}_n\bigl(V_{\pi(1)},\ldots,V_{\pi(n)}\bigr). 
\end{equation}

\begin{theorem}[Symmetrized Kinser slack]
\label{thm:symmetrized-Kinser}
For every arrangement $V_1,\ldots,V_n\subseteq V$, with $n\ge4$,
\begin{equation}
\frac{1}{n!}\sum_{\pi\in\mathfrak{S}_n}\mathcal{K}_n^{\pi}
=(n-2)B_2.
\end{equation}
Equivalently, in terms of the average rank profile,
\begin{equation}
\frac{1}{n!}\sum_{\pi\in\mathfrak{S}_n}\mathcal{K}_n^{\pi}
=(n-2)(2R_2-R_1-R_3).
\end{equation}
\end{theorem}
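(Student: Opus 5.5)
The proof works at the level of the rank function $\mathfrak{r}$: the Kinser slack \eqref{Kinser-slack} is a signed integer combination of values $\mathfrak{r}(S)$ with $|S|\in\{1,2,3\}$. Averaging over $\pi\in\mathfrak{S}_n$ replaces each such term by the corresponding average rank $R_{|S|}$, and Theorem~\ref{thm:average-rank} then converts the result into $B_2$. The geometric decomposition of Theorem~\ref{thm:Kinser-slack-decomposition} is not needed here; the identity is purely combinatorial.

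\textbf{Step 1 (averaging a single term).} Fix a set $I\subseteq[n]$ of $k$ distinct positions. As $\pi$ ranges uniformly over $\mathfrak{S}_n$, the image $\pi(I)$ is uniformly distributed over the $k$-element subsets of $[n]$. Each $k$-subset is hit by exactly $k!(n-k)!$ permutations. Therefore
\[
\frac{1}{n!}\sum_{\pi\in\mathfrak{S}_n}\dim\Bigl(\sum_{l\in I}V_{\pi(l)}\Bigr)=R_k .
\]
This only requires that the indices in each term of \eqref{Kinser-slack} be pairwise distinct, which is where $n\geq4$ enters. In $V_1+V_3+V_n$ we have $n\geq4>3$, so the three indices are distinct. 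In $V_2+V_{i-1}+V_i$ with $i\geq4$, we have $i-1\geq3>2$, so again the indices are distinct. The same holds for all the pair terms.

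\textbf{Step 2 (counting coefficients).} Next I tally the terms of \eqref{Kinser-slack} by cardinality.
\begin{itemize}
\item \emph{Positive pair terms:} $3+2(n-3)=2n-3$.
\item \emph{Negative pair terms:} one, namely $\dim(V_1+V_2)$.
\item \emph{Negative singletons:} $1+(n-3)=n-2$.
\item \emph{Negative triples:} $1+(n-3)=n-2$.
\end{itemize}
By Step 1, the permutation average equals
\[
(2n-4)R_2-(n-2)R_1-(n-2)R_3=(n-2)(2R_2-R_1-R_3).
\]

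\textbf{Step 3 (conclusion).} This is the second displayed form of the theorem. Since $n\geq4$, we have $2\leq n-1$, so formula \eqref{defect-gap-Bk-Rk} of Theorem~\ref{thm:average-rank} applies with $k=2$ and gives $2R_2-R_1-R_3=B_2$. This yields the first form.

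The only delicate point is the bookkeeping in Step 2, together with the distinctness check that justifies Step 1. Once those are verified, the rest is immediate.
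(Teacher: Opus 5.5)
Your proposal is correct and matches the paper's proof: averaging each rank term over $\mathfrak{S}_n$ gives $R_{|S|}$, the coefficients of singleton, pair and triple ranks come out to $-(n-2)$, $2(n-2)$ and $-(n-2)$, and \eqref{defect-gap-Bk-Rk} with $k=2$ converts $2R_2-R_1-R_3$ into $B_2$. Your explicit check that the indices in every term are distinct, which uses $n\geq4$, is a detail the paper leaves implicit.
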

\begin{proof}
Under uniform permutation averaging, a rank term on $k$ distinct labels has average $R_k$. In \eqref{Kinser-slack}, the total coefficients of singleton, pair, and triple ranks are respectively $-(n-2)$, $3+2(n-3)-1=2(n-2)$, and $-(n-2)$. The average is therefore $(n-2)(2R_2-R_1-R_3)$, which equals $(n-2)B_2$ by \eqref{defect-gap-Bk-Rk}.
\end{proof}

\begin{remark}\label{rem:Kinser-symmetry-collapse}
Theorem~\ref{thm:symmetrized-Kinser} shows that complete permutation symmetry removes the additional directional information carried by an individual Kinser inequality: the fully averaged Kinser slack is exactly a multiple of the first curvature $B_2$ of the average rank profile. Thus the representability-specific information in the Kinser hierarchy is inherently nonsymmetric with respect to the labels, whereas its fully symmetric component is already encoded by the graded defect hierarchy.
\end{remark}

The preceding identity extends by contraction to recover the higher curvatures $B_k$. Fixing $S$ and passing to the quotient by its span treats those subspaces as already present in the background. For $S\subseteq[n]$, consider the quotient arrangement $(\overline{V}_j^{\,S})_{j\in[n]\setminus S}$ defined in
\eqref{quotient-subspaces}. If $E$ is a finite set, write $\mathfrak{S}(E)$ for the set of its orderings, identified with bijections $[|E|]\to E$. When $n-|S|\geq4$, for $\pi\in\mathfrak{S}([n]\setminus S)$, let $\mathcal{K}_{n-|S|}^{\,S,\pi}$ denote the Kinser slack of the ordered quotient arrangement $(\overline{V}_{\pi(1)}^{\,S},\ldots,\overline{V}_{\pi(n-|S|)}^{\,S})$. Averaging these slacks over the choices of the contracted set $S$ of size $k-2$ and the orderings of the remaining subspaces yields $B_k$.

\begin{theorem}[Conditioned Kinser representation of the defect gaps]\label{thm:conditioned-Kinser}
Let $n\ge4$. For every $2\leq k\leq n-2$,
\begin{equation}\label{defect-gap-Bk-Kinser-average}
B_k=\frac{1}{(n-k)\binom{n}{k-2}} \sum_{\substack{S\subseteq[n]\\|S|=k-2}}
\frac{1}{(n-k+2)!}\sum_{\pi\in\mathfrak{S}([n]\setminus S)}
\mathcal{K}_{n-k+2}^{S,\pi}.
\end{equation}
\end{theorem}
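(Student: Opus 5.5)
The plan is to reduce \eqref{defect-gap-Bk-Kinser-average} to Theorem~\ref{thm:symmetrized-Kinser} applied to each quotient arrangement. We then identify the resulting average of quotient curvatures with $B_k$ through the average-rank representation of Theorem~\ref{thm:average-rank}.

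First, fix $S\subseteq[n]$ with $|S|=k-2$ and put $m\coloneq n-k+2\geq4$. For $T\subseteq[n]\setminus S$, the quotient rank is
\[
\dim\Bigl(\sum_{j\in T}\overline{V}_j^{\,S}\Bigr)=\mathfrak{r}(T\cup S)-\mathfrak{r}(S),
\]
since $\sum_{j\in T}\overline{V}_j^{\,S}=(U_{T\cup S})/U_S$. Theorem~\ref{thm:symmetrized-Kinser} applies to the quotient family of $m$ subspaces of $V/U_S$. It gives
\[
\frac{1}{m!}\sum_{\pi}\mathcal{K}_m^{S,\pi}=(m-2)\bigl(2R_2^S-R_1^S-R_3^S\bigr),
\]
where $R_t^S$ is the average of $\mathfrak{r}(T\cup S)-\mathfrak{r}(S)$ over $t$-subsets $T\subseteq[n]\setminus S$. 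The constant $\mathfrak{r}(S)$ cancels, because the coefficients in $2R_2^S-R_1^S-R_3^S$ sum to zero. Hence the inner average equals $(n-k)$ times
\[
\frac{1}{\binom{m}{2}}\sum_{|T|=2}\mathfrak{r}(S\cup T)\cdot 2-\frac{1}{m}\sum_{|T|=1}\mathfrak{r}(S\cup T)-\frac{1}{\binom{m}{3}}\sum_{|T|=3}\mathfrak{r}(S\cup T),
\]
with all $T$ disjoint from $S$. The factor $(n-k)$ cancels against the prefactor in \eqref{defect-gap-Bk-Kinser-average}.

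Second, average over all $S$ of size $k-2$. The key counting fact is that choosing a uniform $(k-2)$-set $S$ and then a uniform $t$-subset $T$ of its complement yields $S\cup T$ as a uniformly distributed $(k-2+t)$-subset of $[n]$. This holds because each $(k-2+t)$-set arises from the same number $\binom{k-2+t}{t}$ of pairs. Consequently
\[
\frac{1}{\binom{n}{k-2}}\sum_{S}R_t^S=R_{k-2+t}-R_{k-2}.
\]
Applying this with $t=1,2,3$ gives
\[
2(R_k-R_{k-2})-(R_{k-1}-R_{k-2})-(R_{k+1}-R_{k-2})=2R_k-R_{k-1}-R_{k+1},
\]
which is $B_k$ by \eqref{defect-gap-Bk-Rk}.

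The main obstacle is bookkeeping rather than conceptual. It consists of confirming that Theorem~\ref{thm:symmetrized-Kinser} transfers verbatim to the quotient ambient space $V/U_S$, which holds because it only uses that the arrangement consists of subspaces of a finite-dimensional space. It also requires checking that the normalization $\frac{1}{(n-k)\binom{n}{k-2}}\cdot\frac{1}{(n-k+2)!}$ matches $(m-2)=n-k$ exactly. The uniform-union counting step should be stated explicitly, since it is the point where the contraction average becomes the global average-rank profile. The restriction $k\leq n-2$ enters only through the requirement $m\geq4$ needed to define $\mathcal{K}_m$, which explains why $B_{n-1}$ has no counterpart in this hierarchy.
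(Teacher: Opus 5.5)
Your proposal is correct and follows essentially the same route as the paper. You fix $S$, apply Theorem~\ref{thm:symmetrized-Kinser} to the $m=n-k+2$ quotient subspaces in $V/U_S$ to obtain $(n-k)(2R_2^S-R_1^S-R_3^S)$, then average over $S$ via the double-counting identity $\binom{n}{k-2}^{-1}\sum_S R_j^S=R_{k-2+j}-R_{k-2}$, and conclude with \eqref{defect-gap-Bk-Rk}.
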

\begin{proof}
Fix $S\subseteq[n]$ with $|S|=k-2$, and set $m\coloneq n-k+2\ge4$. Let $(R_j^{S})_{j=0}^{m}$ be the average rank profile of the quotient arrangement. Explicitly, for $0\leq j\leq m$,
\begin{equation*}
R_j^{S}=\frac{1}{\binom{m}{j}}
\sum_{\substack{T\subseteq[n]\setminus S\\|T|=j}}
\Bigl[\dim\Bigl(U_S+\sum_{i\in T}V_i\Bigr)-\dim(U_S)\Bigr].
\end{equation*}
Applying Theorem~\ref{thm:symmetrized-Kinser} to the $m$
quotient subspaces gives
\begin{equation}\label{fixed-S-Kinser}
\frac{1}{m!}\sum_{\pi\in\mathfrak{S}([n]\setminus S)}
\mathcal{K}_{m}^{S,\pi}=(n-k)(2R_2^{S}-R_1^{S}-R_3^{S}).
\end{equation}
For $0\leq j\leq m$, averaging over all $(k-2)$-element subsets $S$ and double counting disjoint pairs $(S,T)$ with $|T|=j$ yields
\begin{equation}\label{quotient-rank-average}
\frac{1}{\binom{n}{k-2}}
\sum_{\substack{S\subseteq[n]\\|S|=k-2}}R_j^{S}
=R_{k-2+j}-R_{k-2}.
\end{equation}
Indeed, every $(k-2+j)$-element set occurs as $S\cup T$
exactly $\binom{k-2+j}{k-2}$ times. Consequently, averaging \eqref{fixed-S-Kinser} and using \eqref{quotient-rank-average} gives
\begin{equation*}
\frac{1}{\binom{n}{k-2}}
\sum_{\substack{S\subseteq[n]\\|S|=k-2}}
\frac{1}{m!}\sum_{\pi\in\mathfrak{S}([n]\setminus S)}
\mathcal{K}_{m}^{S,\pi}=(n-k)(2R_k-R_{k-1}-R_{k+1})
=(n-k)B_k,
\end{equation*}
where the last equality follows from \eqref{defect-gap-Bk-Rk}. Dividing by $n-k$ proves \eqref{defect-gap-Bk-Kinser-average}.
\end{proof}

In particular, for $2\leq k\leq n-2$, $B_k$ is $1/(n-k)$ times the average Kinser slack after contracting $k-2$ subspaces. Thus the curvature has two readings: it is the average drop in marginal rank, and, for representable arrangements, it is also an averaged slack of a representability-specific inequality on quotient arrangements.

The numerical averaging identity also holds for arbitrary set functions, using contraction $\mathfrak{r}_S(T)=\mathfrak{r}(S\cup T)-\mathfrak{r}(S)$. Linear representability supplies the nonnegative individual Kinser slacks and therefore the following rigidity criterion for vanishing curvature.

\begin{corollary}[Kinser rigidity for vanishing defect curvature]\label{cor:Kinser-Br-zero}
For $2\leq k\leq n-2$, one has $B_k=0$ if and only if $\mathcal{K}_{n-k+2}^{S,\pi}=0$ for every $S\subseteq[n]$ with $|S|=k-2$ and every $\pi\in\mathfrak{S}([n]\setminus S)$. Equivalently, every ordered quotient arrangement obtained by contracting a set of $k-2$ subspaces attains equality in the corresponding Kinser inequality.
\end{corollary}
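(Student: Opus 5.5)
The corollary follows from Theorem~\ref{thm:conditioned-Kinser} together with the nonnegativity of individual Kinser slacks. Fix $2\leq k\leq n-2$ and put $m\coloneq n-k+2\geq4$. By \eqref{defect-gap-Bk-Kinser-average}, $B_k$ equals a strictly positive constant, $\frac{1}{(n-k)\binom{n}{k-2}\,m!}$ (positive since $n-k\geq2$), times the finite sum
\[
\sum_{\substack{S\subseteq[n]\\|S|=k-2}}\ \sum_{\pi\in\mathfrak{S}([n]\setminus S)}\mathcal{K}_{m}^{S,\pi}.
\]
It therefore suffices to show that each summand $\mathcal{K}_m^{S,\pi}$ is nonnegative. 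A finite sum of nonnegative reals vanishes exactly when every term vanishes, and this gives both directions of the equivalence at once.

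For nonnegativity, note that for fixed $S$ and $\pi$, the quantity $\mathcal{K}_m^{S,\pi}$ is by definition the Kinser slack \eqref{Kinser-slack} of the ordered family $(\overline{V}^{\,S}_{\pi(1)},\ldots,\overline{V}^{\,S}_{\pi(m)})$ of subspaces of the finite-dimensional space $V/U_S$, with $m\geq4$. This family is a genuine subspace arrangement, so linear representability holds automatically. Kinser's inequality \eqref{Kinser-nonnegative} then gives $\mathcal{K}_m^{S,\pi}\geq0$. Alternatively, Theorem~\ref{thm:Kinser-slack-decomposition} applied inside $V/U_S$ writes it as a sum of quotient dimensions $[Z:X]\geq0$.

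The main point to check is bookkeeping, not depth. One must confirm that the slacks appearing in Theorem~\ref{thm:conditioned-Kinser} are exactly the slacks of the ordered quotient arrangements named in the corollary, evaluated by dimensions in $V/U_S$. Since $\dim\bigl(\sum_{j\in T}\overline{V}^{\,S}_j\bigr)=\dim\bigl(U_S+\sum_{j\in T}V_j\bigr)-\dim(U_S)$, these are the same quantities. The contraction $\mathfrak{r}_S$ mentioned after Theorem~\ref{thm:conditioned-Kinser} is not merely an abstract set function here: it is realized by honest subspaces. This realization is what makes Kinser's inequality applicable, and it is where representability enters, in contrast to the purely numerical averaging identity.

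The equivalent reformulation in the final sentence of the corollary is a restatement. Each vanishing $\mathcal{K}_m^{S,\pi}=0$ is, by definition, equality in the Kinser inequality for that ordered quotient arrangement. If desired, one may also invoke Corollary~\ref{cor:Kinser-equality} in $V/U_S$ to translate each such equality into the four explicit geometric conditions.
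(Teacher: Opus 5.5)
Your proof is correct and follows the paper's argument. Both read \eqref{defect-gap-Bk-Kinser-average} as a strictly positive multiple of a finite sum of Kinser slacks of genuine subspace arrangements in $V/U_S$, use \eqref{Kinser-nonnegative} to get nonnegativity of each slack, and conclude that the sum vanishes exactly when every slack vanishes; your extra bookkeeping (the explicit constant and the identification of the quotient rank function) and the alternative appeal to Theorem~\ref{thm:Kinser-slack-decomposition} are sound refinements rather than a different route.
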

\begin{proof}
Every quotient arrangement in \eqref{defect-gap-Bk-Kinser-average} is again linearly representable. Hence each Kinser slack $\mathcal{K}_{n-k+2}^{S,\pi}$ is nonnegative by \eqref{Kinser-nonnegative}. Formula~\eqref{defect-gap-Bk-Kinser-average} therefore expresses $B_k$ as a positive constant times the average of nonnegative quantities. The average vanishes if and only if every summand vanishes.
\end{proof}

There can be no cancellation in this conclusion since every contracted Kinser slack is nonnegative. Consequently, $B_k=0$ forces every contracted ordering to attain equality individually, not just on average.

\begin{remark}\label{rem:Kinser-defect-bridge}
Theorem~\ref{thm:conditioned-Kinser} provides a direct bridge between two different hierarchies. The quantities $B_k$ arise from the symmetric dimension-defect hierarchy and measure the discrete curvature of the average rank profile, while the Kinser inequalities encode additional constraints imposed by linear representability. For $2\leq k\leq n-2$, the curvature $B_k$ equals $1/(n-k)$ times the average Kinser slack of quotient arrangements obtained by contracting $k-2$ subspaces, where the average is taken over both the contraction sets and the orderings of the remaining subspaces. In particular, the full symmetrization of a representability-specific inequality does not produce a new symmetric inequality beyond the submodular curvature $B_k\geq0$; instead, the additional representability information is retained in the individual, nonsymmetrized Kinser slacks. The final curvature $B_{n-1}$ has no counterpart in this construction, since the Kinser hierarchy begins with four subspaces.
\end{remark}

\begin{example}\label{ex:final-curvature}
%[Dependence confined to the final curvature]
Let $n\geq4$, let $e_1,\ldots,e_{n-1}$ be the standard basis of $\mathbbm{F}^{n-1}$, and set
\[
V_i=\operatorname{span}\{e_i\}\quad(1\leq i\leq n-1),
\qquad\qquad
V_n=\operatorname{span}\{e_1+\cdots+e_{n-1}\}.
\]
Every proper selection of these lines is independent, whereas the full selection has rank $n-1$. Thus
\[
R_k=\min\{k,n-1\},\qquad A_2=\cdots=A_{n-1}=0,\qquad A_n=1,
\]
and
\[
B_2=\cdots=B_{n-2}=0,\qquad B_{n-1}=1.
\]
By Corollary~\ref{cor:Kinser-Br-zero}, every contracted Kinser slack appearing in Theorem~\ref{thm:conditioned-Kinser} vanishes, although the total dimension defect is $1$. This arrangement exhibits explicitly the dependence measured by the final curvature and omitted from that contraction construction.
\end{example}

The results of this subsection do not address the completeness problem for linear-rank inequalities posed by Kinser. Rather, they describe the geometric slack and the symmetric component of the Kinser hierarchy. In particular, complete permutation averaging collapses the representability-specific Kinser slack to the submodular curvature $B_2$, so these particular fully averaged inequalities add no constraint beyond submodularity.

%%%%%%%%%%%%%%%%%%%%%%%%%%%%%%%%%%%%%%%%%%%%%%%%%
\section{Discussion and outlook}
%%%%%%%%%%%%%%%%%%%%%%%%%%%%%%%%%%%%%%%%%%%%%%%%%
The symmetric Grassmann formula developed here expresses the dimension of a finite sum through intersections of individual subspaces with partial sums. Its explicit coefficients depend only on the number of subspaces involved and arise by averaging the ordered Grassmann identity. This provides a nonrecursive organization of dimension data complementary to the recursive formulas of Tian and Makrides--Szemberg \cite{Tian19,MS25,Tian25}. The connection with the Shapley value explains both the coefficients and the interpretation of each subspace's contribution as its expected marginal dimension.

The graded profile $A_2,\ldots,A_n$ turns this formula into a description of how overlap develops as more subspaces are included. Its successive gaps yield exact endpoint remainders, coincide with the discrete curvature of the average rank profile, and characterize endpoint equality through a direct sum modulo the common intersection. The monotonicity has a submodular origin, while the equality theorem records the corresponding subspace geometry. The annihilator construction gives parallel statements for intersections.

The symmetric defect quantities introduced above are further related to representability-specific linear-rank inequalities. Retaining the nonnegative remainders in Kinser's proof yields explicit quotient dimensions and corresponding equality conditions. Their full permutation average gives $(n-2)B_2$, while contraction recovers $B_k$ for $2\leq k\leq n-2$. Thus ordered Kinser slacks and averaged defect curvature describe related aspects of the same subspace arrangement. The averaged identities do not produce new independent linear-rank inequalities; the additional representability-specific information remains in the ordered slacks. The final curvature $B_{n-1}$ lies outside this contraction construction.

Several directions remain natural. One is to obtain a structural classification, up to linear isomorphism, of the arrangements satisfying $B_k=0$ at a fixed intermediate level, using the quotient-intersection criterion in Corollary~\ref{cor:Bk-quotient}. Another is to study weighted or partially symmetric orderings that preserve more information about a structured family. Analogous formulas in ranked modular lattices, and the behavior of the defect profile under tensor products or random subspace models, also merit investigation. Since representable polymatroids occur in network coding and information inequalities \cite{CG08,DFZ07}, it is natural to ask whether the explicit intersection losses and equality criteria can sharpen dimension arguments in those settings. Such applications require a separate analysis of the underlying subspace arrangements.

%%%%%%%%%%%%%%%%%%%%%%%%%%%%%%%%%%%%%%%%%%%%%%%%%
\section*{Acknowledgments}
M.~G. acknowledges helpful discussions with Shmuel Friedland on inclusion--exclusion for subspaces, particularly his observations on the role of exact sequences and Grothendieck groups. This work was supported by the European Union through the ERC Advanced Grant \emph{TAtypic}, Project No.~101142236. Views and opinions expressed are, however, those of the authors only and do not necessarily reflect those of the European Union or the European Research Council Executive Agency. Neither the European Union nor the granting authority can be held responsible for them.

%%%%%%%%%%%%%%%%%%%%%%%%%%%%%%%%%%%%%%%%%%%%%%%%%
\appendix
%%%%%%%%%%%%%%%%%%%%%%%%%%%%%%%%%%%%%%%%%%%%%%%%%
\section{An inductive proof of the symmetric Grassmann formula}
\label{app:inductive-proof}
%%%%%%%%%%%%%%%%%%%%%%%%%%%%%%%%%%%%%%%%%%%%%%%%%
We give a second proof of Theorem~\ref{thm:symmetric-Grassmann} by averaging the two-subspace Grassmann formula over the choice of the last subspace. We prove by induction on $n$ that
\begin{equation*}
\dim\Bigl(\sum_{i\in[n]}V_i\Bigr)=\sum_{i\in[n]}
\Bigl[\dim(V_i)-\sum_{\emptyset\neq S\subseteq[n]\setminus\{i\}}c_{n,|S|+1}\,\dim\Bigl(V_i\cap\sum_{j\in S}V_j\Bigr)\Bigr].
\end{equation*}

\begin{proof}[Alternative proof of Theorem~\ref{thm:symmetric-Grassmann}]
We proceed by induction on $n$. For $n=2$, each of the two pointed terms $\dim(V_1\cap V_2)$ and $\dim(V_2\cap V_1)$ has coefficient $c_{2,2}=1/2$. Together they give the classical Grassmann formula. Assume the formula holds for $n-1$ subspaces. For each $i\in[n]$, set
\begin{equation*}
W_{i}\coloneq\sum_{j\in[n]\setminus\{i\}}V_j.
\end{equation*}
Then $\sum_{j\in[n]}V_j=W_{i}+V_i$, so the two-subspace Grassmann formula gives
\begin{equation*}
\dim\Bigl(\sum_{j\in[n]}V_j\Bigr)=\dim(W_{i})+\dim(V_i)-\dim(W_{i}\cap V_i).
\end{equation*}
Since this identity holds for every $i\in[n]$, averaging over all $i$ yields
\begin{equation*}
\dim\Bigl(\sum_{j\in[n]}V_j\Bigr)=\frac1n\sum_{i\in[n]}
\Bigl[\dim(W_{i})+\dim(V_i)-\dim(W_{i}\cap V_i)\Bigr].
\end{equation*}
By the induction hypothesis, for each fixed $i$,
\begin{equation*}
\dim(W_{i})=\sum_{j\in[n]\setminus\{i\}}
\Bigl[\dim(V_j)-\sum_{\emptyset\neq S\subseteq[n]\setminus\{i,j\}}c_{n-1,|S|+1}\,\dim\Bigl(V_j\cap\sum_{l\in S}V_l\Bigr)\Bigr].
\end{equation*}
Substitute these expansions into the averaged identity. We determine the coefficient of each dimension term and each intersection term separately. The terms involving only individual dimensions simplify to
\begin{equation*}
\frac{1}{n}\sum_{i\in[n]}
\Bigl(\sum_{j\in[n]\setminus\{i\}} \dim(V_j)+\dim(V_i)\Bigr)
=\frac1n\sum_{i\in[n]}\sum_{j\in[n]}\dim(V_j)
=\sum_{j\in[n]}\dim(V_j).
\end{equation*}
Now fix $j\in[n]$ and a nonempty proper subset $S\subsetneq[n]\setminus\{j\}$. Put $k\coloneq|S|+1$; then $2\leq k<n$. We count the choices of the omitted index $i$ for which this fixed correction term appears. The term $\dim(V_j\cap\sum_{l\in S}V_l)$ occurs in the inductive expansion of $\dim(W_{i})$ exactly when $i\notin S\cup\{j\}$. There are $n-k$ such choices of $i$. Hence its coefficient in the averaged correction is
\begin{equation*}
\frac{n-k}{n}\,c_{n-1,k}=\frac{n-k}{n}\,\frac{1}{k\binom{n-1}{k}}=\frac{1}{k\binom{n}{k}}=c_{n,k}.
\end{equation*}
Finally, when $S=[n]\setminus\{j\}$, the term $\dim(W_{j}\cap V_j)$ occurs directly in the Grassmann formula for the single choice $i=j$. Its coefficient is $1/n=c_{n,n}$. Thus every correction term has the coefficient asserted in \eqref{sym-Grassmann}. %Reindexing by $T=S\cup\{j\}$ also gives \eqref{sym-Grassmann-r}.
\end{proof}

%%%%%%%%%%%%%%%%%%%%%%%%%%%%%%%%%%%%%%%%%%%%%%%%%
\section{An information-theoretic interpretation}
\label{app:information-theoretic}
%%%%%%%%%%%%%%%%%%%%%%%%%%%%%%%%%%%%%%%%%%%%%%%%%
Throughout this appendix, $\log$ denotes the base-$2$ logarithm. For a finite-valued random variable $X$ with probability mass function $\mathtt{p}_X$, define its entropy by
\begin{equation}
H(X)\coloneq-\sum_x\mathtt{p}_X(x)\log\mathtt{p}_X(x).
\end{equation}
For finite-valued random variables $X,Y,Z$, define the mutual information and conditional mutual information by
\begin{align}
I(X;Y)&\coloneq H(X)+H(Y)-H(X,Y), \\
I(X;Y\mid Z)&\coloneq H(X,Z)+H(Y,Z)-H(Z)-H(X,Y,Z),
\end{align}
where joint arguments denote joint entropy. See, for example, \cite[Chapter~2]{CT06}.

We first relate these quantities to the subspaces in the paper. Assume that $V$ is over a finite field $\mathbbm{F}_q$ with $q$ elements, and put $d\coloneq\dim(V)$. After choosing a basis, identify $V$ with $\mathbbm{F}_q^d$. Let $Z$ be a uniformly random vector in $\mathbbm{F}_q^d$, so each vector has probability $q^{-d}$. For each subspace $V_i\subseteq\mathbbm{F}_q^d$, choose a matrix $G_i$ whose row space is $V_i$, and set $X_i\coloneq G_iZ$. For $S\subseteq[n]$, write $X_S=(X_i)_{i\in S}$, with $H(X_\emptyset)=0$. The linear map $Z\mapsto X_S$ has rank $\dim\bigl(\sum_{i\in S}V_i\bigr)$, and therefore
\begin{equation}\label{linear-source-entropy}
H(X_S)=\dim\Bigl(\sum_{i\in S}V_i\Bigr)\log q=\mathfrak{r}(S)\log q.
\end{equation}
Thus, for this linear source, the entropy of $X_S$ is the dimension of the span of the corresponding subspaces, multiplied by $\log q$.

For distinct $i,j\notin S$, the conditional mutual information has the geometric form
\begin{equation}\label{linear-source-cmi}
I(X_i;X_j\mid X_S)=
\Bigl[\dim\Bigl(V_i\cap\Bigl(V_j+\sum_{l\in S}V_l\Bigr)\Bigr)-\dim\Bigl(V_i\cap\sum_{l\in S}V_l\Bigr)
\Bigr]\log q.
\end{equation}

We now state the corresponding identities for arbitrary finite-valued random variables $X_1,\ldots,X_n$. For $S\subseteq[n]$, write $X_S=(X_i)_{i\in S}$, with $H(X_\emptyset)=0$. For $0\leq k\leq n$, define the average entropy profile by
\begin{equation}\label{average-entropy}
\overline{H}_k\coloneq\frac{1}{\binom{n}{k}} \sum_{\substack{S\subseteq[n]\\|S|=k}}H(X_S),
\qquad \overline{H}_0=0.
\end{equation}
For $2\leq k\leq n$ and $2\leq k\leq n-1$, respectively, define
\begin{equation}\label{entropy-defects}
A_k^{H}\coloneq\overline{H}_1+\overline{H}_{k-1}-\overline{H}_k, \qquad B_k^{H}\coloneq A_{k+1}^{H}-A_k^{H}.
\end{equation}

\begin{proposition}\label{prop:entropy-correspondence}
For $2\leq k\leq n$, the average mutual information is $A_k^{H}$:
\begin{equation}\label{Ak-mutual-information}
A_k^{H}=c_{n,k}\sum_{\substack{T\subseteq[n]\\|T|=k}} \sum_{i\in T} I(X_i;X_{T\setminus\{i\}}).
\end{equation}
Here $c_{n,k}\coloneq(k\binom{n}{k})^{-1}$. For $2\leq k\leq n-1$, the average conditional mutual information is $B_k^{H}$:
\begin{equation}\label{Bk-conditional-mutual-information}
B_k^{H}=\frac{c_{n,k}}{n-k}\sum_{i\in[n]} \sum_{\substack{S\subseteq[n]\setminus\{i\}\\|S|=k-1}} \sum_{j\in[n]\setminus(S\cup\{i\})}I(X_i;X_j\mid X_S).
\end{equation}
Moreover, the total correlation satisfies
\begin{equation}\label{total-correlation}
\sum_{i\in[n]}H(X_i)-H(X_1,\ldots,X_n)=\sum_{k=2}^{n}A_k^{H}.
\end{equation}
In particular, $A_k^{H}\geq0$ and $B_k^{H}\geq0$, and the average entropy profile $(\overline{H}_k)_{k=0}^n$ is discretely concave.
\end{proposition}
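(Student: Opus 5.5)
The plan is to copy the proof of Theorem~\ref{thm:average-rank}, replacing the Grassmann identity \eqref{pointed-rank} by the definition of mutual information. Throughout we use only that $H$ is a set function on $2^{[n]}$ with $H(X_\emptyset)=0$, and that it is monotone and submodular (Shannon inequalities).

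\textbf{The $A_k^H$ identity.} For $|T|=k$ and $i\in T$, the definition gives
\[
I(X_i;X_{T\setminus\{i\}})=H(X_i)+H(X_{T\setminus\{i\}})-H(X_T).
\]
The weight $c_{n,k}$ is exactly the reciprocal of the number $k\binom{n}{k}$ of pairs $(T,i)$. So the right-hand side of \eqref{Ak-mutual-information} is a uniform average over such pairs. Under this average:
\begin{itemize}
\item $i$ is uniform on $[n]$, so the first term averages to $\overline{H}_1$;
\item $T\setminus\{i\}$ is uniform among $(k-1)$-subsets, so the second term averages to $\overline{H}_{k-1}$;
\item $T$ is uniform among $k$-subsets, so the third term averages to $\overline{H}_k$.
\end{itemize}
This gives $A_k^H=\overline{H}_1+\overline{H}_{k-1}-\overline{H}_k$.

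\textbf{Total correlation.} Summing over $k=2,\dots,n$ telescopes:
\[
\sum_{k=2}^n A_k^H=(n-1)\overline{H}_1+\overline{H}_1-\overline{H}_n=n\overline{H}_1-\overline{H}_n,
\]
which is $\sum_i H(X_i)-H(X_{[n]})$.

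\textbf{The $B_k^H$ identity.} I would mirror Proposition~\ref{prop:Bk-local}, using the chain rule
\[
I(X_i;X_j\mid X_S)=I(X_i;X_{S\cup\{j\}})-I(X_i;X_S).
\]
This holds because both sides expand to $H(X_{S\cup\{i\}})+H(X_{S\cup\{j\}})-H(X_S)-H(X_{S\cup\{i,j\}})$. Set $f_i(S)=I(X_i;X_S)$ and let $M_{i,s}$ be its average over $s$-subsets of $[n]\setminus\{i\}$.

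The counting in the proof of Proposition~\ref{prop:Bk-local} uses only the combinatorics of the index sets, not dimensions. Fix $i$. Each $k$-set $S\cup\{j\}$ appears $k$ times, and each $(k-1)$-set $S$ appears $n-k$ times. Hence the inner double sum equals $(n-k)\binom{n-1}{k-1}(M_{i,k}-M_{i,k-1})$. By the first part, $A_\ell^H=\tfrac1n\sum_i M_{i,\ell-1}$. Multiplying by $c_{n,k}/(n-k)=1/\bigl(n(n-k)\binom{n-1}{k-1}\bigr)$ and summing over $i$ therefore yields $A_{k+1}^H-A_k^H=B_k^H$.

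\textbf{Signs and concavity.} Mutual information and conditional mutual information are nonnegative by the Shannon inequalities. Hence $A_k^H\ge0$ from \eqref{Ak-mutual-information} and $B_k^H\ge0$ from \eqref{Bk-conditional-mutual-information}.

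For concavity, the first part gives $A_k^H=\overline{H}_1-(\overline{H}_k-\overline{H}_{k-1})$, so
\[
B_k^H=(\overline{H}_k-\overline{H}_{k-1})-(\overline{H}_{k+1}-\overline{H}_k).
\]
Thus $B_k^H\ge0$ gives the concavity inequalities for $2\le k\le n-1$. The first step, $\overline{H}_2-\overline{H}_1\le\overline{H}_1-\overline{H}_0$, is exactly $A_2^H\ge0$.

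There is no real obstacle. The only point needing care is the $B_k^H$ reindexing, and that is handled by reusing the counting argument of Proposition~\ref{prop:Bk-local}.
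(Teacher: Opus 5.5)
Your proposal is correct and matches the paper's argument for the $A_k^H$ identity, the telescoping for total correlation, and the sign and concavity step. The only difference is the $B_k^H$ identity: you use the chain rule $I(X_i;X_j\mid X_S)=I(X_i;X_{S\cup\{j\}})-I(X_i;X_S)$ and the $M_{i,s}$ bookkeeping of Proposition~\ref{prop:Bk-local}, whereas the paper averages the four-term entropy expansion of $I(X_i;X_j\mid X_S)$ over the $n\binom{n-1}{k-1}(n-k)$ triples $(S,i,j)$ to get $2\overline{H}_k-\overline{H}_{k-1}-\overline{H}_{k+1}$ directly, which is slightly shorter and does not need the first part as input.
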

\begin{proof}
For each $T\subseteq[n]$ and $i\in T$,
\[
I(X_i;X_{T\setminus\{i\}})=H(X_i)+H(X_{T\setminus\{i\}})-H(X_T).
\]
Averaging over all pairs $(T,i)$ with $|T|=k$ gives \eqref{Ak-mutual-information}, since the three terms have averages $\overline{H}_1$, $\overline{H}_{k-1}$, and $\overline{H}_k$, respectively. Taking successive differences in \eqref{entropy-defects} gives
\[
B_k^{H}=2\overline{H}_k-\overline{H}_{k-1}-\overline{H}_{k+1}.
\]
For distinct $i,j\notin S$,
\[
I(X_i;X_j\mid X_S)=H(X_{S\cup\{i\}})+H(X_{S\cup\{j\}})-H(X_S)-H(X_{S\cup\{i,j\}}).
\]
Average this identity over all triples $(S,i,j)$ with $|S|=k-1$ and distinct $i,j\notin S$. The four terms have averages $\overline{H}_k$, $\overline{H}_k$, $\overline{H}_{k-1}$, and $\overline{H}_{k+1}$, respectively. There are $n\binom{n-1}{k-1}(n-k)$ such triples. This gives \eqref{Bk-conditional-mutual-information}. Finally, summing $A_k^{H}=\overline{H}_1+\overline{H}_{k-1}-\overline{H}_k$ over $2\leq k\leq n$ gives \eqref{total-correlation}. Nonnegativity of mutual and conditional mutual information gives $A_k^H\geq0$ and $B_k^H\geq0$, respectively. For $2\leq k\leq n-1$, $2\overline{H}_k-\overline{H}_{k-1}-\overline{H}_{k+1}=B_k^H\geq0$, while the first step is $2\overline{H}_1-\overline{H}_0-\overline{H}_2=A_2^H\geq0$. Together these inequalities establish discrete concavity of the full average entropy profile.
\end{proof}

For the linear source constructed above, \eqref{linear-source-entropy} implies that its average entropy profile satisfies $\overline{H}_k=R_k\log q$. Consequently, its entropy defects satisfy
\begin{equation}\label{linear-source-defects}
A_k^{H}=A_k\log q, \qquad B_k^{H}=B_k\log q.
\end{equation}
Thus the average mutual information in \eqref{Ak-mutual-information} equals $A_k\log q$, and the average conditional mutual information in \eqref{Bk-conditional-mutual-information} equals $B_k\log q$. For arbitrary finite-valued random variables, the identities in Proposition~\ref{prop:entropy-correspondence} still hold, although their average entropy profiles need not arise from a linear source.

%%%%%%%%%%%%%%%%%%%%%%%%%%%%%%%%%%%%%%%%%%%%%%%%%

%%%%%%%%%%%%%%%%%%%%%%%%%%%%%%%%%%%%%%%%%%%%%%%%%

\end{document}